\documentclass{amsart}
\usepackage{tabu}
\usepackage[margin=95pt]{geometry}
\usepackage{enumitem}
\allowdisplaybreaks

\usepackage{amssymb}
\usepackage{amsmath}
\usepackage{amscd}
\usepackage{amsbsy}
\usepackage{comment}
\usepackage{enumitem}
\usepackage[matrix,arrow]{xy}
\usepackage{hyperref}
\newcommand{\Q}{{\mathbb Q}}
\newcommand{\Z}{{\mathbb Z}}

\def\mod#1{{\ifmmode\text{\rm\ (mod~$#1$)}
\else\discretionary{}{}{\hbox{ }}\rm(mod~$#1$)\fi}}

\begin {document}

\newtheorem{thm}{Theorem}
\newtheorem{lem}{Lemma}[section]
\newtheorem{prop}[lem]{Proposition}

\newtheorem{cor}[lem]{Corollary}

\theoremstyle{definition}

\theoremstyle{remark}

\title[Perfect $2$-codes over arbitrary alphabets]
{Perfect $2$-codes over arbitrary alphabets}

\author[Michael Bennett]{Michael A. Bennett}
\address{Department of Mathematics, University of British Columbia, Vancouver, B.C., V6T 1Z2 Canada}
\email{bennett@math.ubc.ca}

\date{\today}

\keywords{error-correcting codes, 2-perfect codes, polynomial-exponential equations,
linear forms in logarithms, Ridout's theorem}
\subjclass[2020]{Primary 94B65, 11D61, Secondary 11J86, 11J87}

\begin {abstract}
The classification of  perfect $e$-codes over an arbitrary alphabet of size $q$ is complete for $e > 2$. In the case of non prime power $q$, it is conjectured that no perfect $2$-codes exist. We confirm this conjecture in a number of situations, including the case where $q=2^\alpha p^\beta$ with $p$ prime, $\alpha$ and $\beta$ positive integers, and either $\alpha \leq 20$, or $\alpha$ sufficiently large.
\end {abstract}
\maketitle

\section{Introduction}

Let $F$ be a finite set (or {\it alphabet}) of cardinality $q \geq 2$, and, for some positive integer $n$, let $V$ be the Cartesian product $F^n$; we call elements of $V$ {\it words}. For any two words ${\bf{x}}$ and ${\bf{y}}$ in V, we define the {\it Hamming distance} $d_F ({\bf{x}}, {\bf{y}})$  to be the number of coordinate places in which ${\bf{x}}$ and ${\bf{y}}$ differ. Given a nonnegative integer $e$ and a word ${\bf{x}} \in V$, we define the sphere of radius $e$, centred at ${\bf{x}}$ to be
$$
S({\bf{x}},e) = \left\{ {\bf{y}} \in V \; : \; d_F ({\bf{x}}, {\bf{y}}) \leq e \right\},
$$
so that the cardinality of $S({\bf{x}},e)$ satisfies
$$
|S({\bf{x}},e)| = \sum_{i=0}^e \binom{n}{i} (q-1)^i.
$$
A subset $\mathcal{C}$ of $V$ is called a {\it code}, and its elements {\it code words}. We call a code $\mathcal{C}$ a {\it perfect $e$-code} if the Hamming spheres $S({\bf{x}},e)$ about the code words ${\bf{x}} \in \mathcal{C}$ form a partition of $V$, i.e. if for every ${\bf{y}} \in V$, there is a unique ${\bf{x}} \in \mathcal{C}$ such that $d_F ({\bf{x}}, {\bf{y}}) \leq e$.

The classification of perfect $e$-codes is almost complete, primarily due to a number of remarkable results from the 1970s.
Restricting our attention to multiple error correcting codes, i.e. to the case $e \geq 2$,  Tiet\"av\"ainen \cite{Tie1} and, independently, Leontiev and Zinoviev \cite{LZ} proved that the only such perfect $e$-codes with $q$ a prime power, are the binary and ternary Golay codes with $(n,q,e)=(23,2,3)$ and $(11,3,2)$. If $q$ is not a prime power, it is conjectured that no perfect $e$-codes exist with alphabet of size $q$. This is known to be true for $e \geq 3$ by work of Best \cite{Best}, Hong \cite{Hong} and Reuvers \cite{Reu} (see also Bannai \cite{Bann}, Laakso \cite{Laak} and  Tiet\"av\"ainen \cite{Tie2}). The case $e=1$ is less well understood, particularly for an arbitrary alphabet; see e.g. Heden \cite{Hed} and Heden and Roos \cite{HR} for what is known. Excellent surveys on perfect codes are available, both classical, due to van Lint \cite{VL1}, \cite{VL2}, and more modern, see e.g. Etzion \cite{Etz} and Heden \cite{Hed}.

To complete the classification of multiple perfect $e$-codes over arbitrary alphabets, it remains then to consider the case $e=2$ with non-prime-power alphabet size $q$.  The main theorem of Reuvers \cite{Reu0} (see also Theorem 4.3.2 of \cite{Reu} and  Theorem 2 of  \cite{CG}) is the following.
\begin{thm}[Reuvers] \label{reu0}
If $q$ is fixed, then the number of perfect $2$-codes with alphabet $q$ is finite.
\end{thm}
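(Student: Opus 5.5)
Fix $q$ and suppose $\mathcal{C}\subseteq F^n$ is a perfect $2$-code. The plan is to combine two standard necessary conditions, the sphere-packing condition and Lloyd's theorem, and show that together they force $n$ to be bounded in terms of $q$. Finiteness of the set of codes then follows, since for each of the finitely many admissible $n$ there are only finitely many subsets of $F^n$.

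\textbf{Step 1 (sphere-packing condition).} Since the spheres $S(\mathbf{x},2)$ partition $V$, we have $|\mathcal{C}|\cdot|S(\mathbf{x},2)|=q^n$. Hence
$$
1+n(q-1)+\binom{n}{2}(q-1)^2 \quad\text{divides}\quad q^n .
$$
Equivalently, every prime factor of $N(n)=2+2n(q-1)+n(n-1)(q-1)^2$, apart from the prime $2$, divides $q$, and $N(n)$ must divide $2q^n$.

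\textbf{Step 2 (Lloyd's theorem).} For a perfect $e$-code, the Lloyd polynomial of degree $e$ has $e$ distinct integer zeros in $\{1,\dots,n\}$. This holds for arbitrary $q$, via the association scheme and Delsarte theory argument. For $e=2$ the Lloyd polynomial is, up to a constant factor,
$$
L_2(x)=(q-1)^2(n-1)n - \bigl((q-1)(n-1)+q\bigr)\cdot\text{(linear in $x$)}\cdots ,
$$
namely a quadratic $q^2x^2-q\bigl(2(q-1)(n-1)+q+2\bigr)x+(q-1)^2n(n-1)+2(q-1)n+2$ (up to normalization). Let its roots be $x_1,x_2$. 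Then $x_1+x_2$ and $x_1x_2$ are explicit in $n,q$. In particular $q^2x_1x_2=N(n)$, which divides $2q^{n+2}$. Since $x_1,x_2$ are integers, $x_1x_2$ has all its prime factors dividing $2q$. Moreover $x_1+x_2\approx 2(q-1)n/q$ and $x_1x_2\approx (q-1)^2n^2/q^2$. So both roots are of size roughly $(q-1)n/q$ and are close to each other: the discriminant is of order $n$, and $|x_1-x_2|\asymp\sqrt{n}$.

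\textbf{Step 3 (finiteness).} We now have two integers $x_1,x_2$, both composed only of primes dividing $2q$, and satisfying $0<|x_1-x_2|\ll \sqrt{x_1}$. I would invoke an $S$-unit result. Write $x_2/x_1=1+\delta$ with $|\delta|\ll x_1^{-1/2}$. By Ridout's theorem, or more elementarily by Baker-type lower bounds for $|\log x_2-\log x_1|$ (a linear form in logarithms of the finitely many primes dividing $2q$), we get $|x_1-x_2|\gg x_1^{1-\epsilon}$ once $x_1$ is large. Note that $x_2-x_1$ is a difference of two $S$-units, and by the $S$-adic Thue--Siegel--Roth theorem (Ridout) such a difference exceeds $\max(x_1,x_2)^{1-\epsilon}$ unless the two numbers are small. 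This contradicts $|x_1-x_2|\ll\sqrt{x_1}$ for large $n$, so $n$ is bounded in terms of $q$. If $x_1=x_2$, the discriminant vanishes, which would be a separate case: it is impossible because Lloyd's theorem gives distinct zeros, or alternatively a direct check shows the discriminant is a nonzero polynomial in $n$ with only finitely many roots.

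\textbf{Main obstacle.} The hardest part is making Step 2 precise: computing the Lloyd polynomial exactly and verifying that its roots are genuinely $S$-units, with the right powers of $2$ and the factor of $q$ in the leading coefficient. The product relation gives $S$-unit information only for the product $x_1x_2$, but since each $x_i$ divides it, each root individually is an $S$-integer supported on primes dividing $2q$. Once this is in place, the Diophantine step is the (ineffective) Ridout input. An effective version would instead use linear forms in logarithms.
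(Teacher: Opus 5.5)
Your argument is correct, and it is essentially the route the paper takes in proving its stronger Theorem~\ref{thm-main0}, which contains this statement: the Lloyd roots are $S$-units for $S$ the set of primes dividing $2q$, they satisfy $0<r_2-r_1<(6r_1/q)^{1/2}$, and a lower bound for gaps between $S$-units then bounds $r_1>(1-2/q)n-1$, and hence $n$. The paper differs only in getting the closeness from the exact identity $q(r_2-r_1)^2=2(r_1+r_2)+q-6$ rather than from discriminant asymptotics, and in using Tijdeman's effective Baker-type bound (your stated alternative) rather than the ineffective Ridout route.
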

\noindent In fact, no such codes are known, and, as noted earlier, it is conjectured that none exist. Results in this direction, however, are rather sparse; we summarize them as follows :
\begin{itemize}
\item There are no perfect $2$-codes on $q$ symbols, if $q=2^\alpha 3^\beta$ for $\alpha$ and $\beta$ positive integers (Bassalygo et al \cite{BZLF}).
\item There are no perfect $2$-codes on $q$ symbols, if 
$q \in \{ 10, 15, 21, 22, 26, 30, 35 \}$ (van Lint \cite{VL1} and Reuvers \cite{Reu}).
\item  There are no perfect $2$-codes on $q$ symbols, if $q=2^\alpha p^\beta$ for $\alpha$ and $\beta$ positive integers, and $p$ prime, if $p \equiv 1 \mod{8}$, or if $p \equiv 5 \mod{8}$ and either $\alpha \geq 2$,  or $\alpha=1$ and $\beta$ even, or if $p=2^t-1$ for $t \geq 3$ an integer (Reuvers \cite{Reu}).
\item There are no perfect $2$-code on $q$ symbols, if 
$q=p_1^\alpha p_2^\beta$, where $p_2>p_1>3$ are primes with $p_2 \equiv 1 \mod{p_1}$,
and $\alpha$ and $\beta$ are positive integers (Reuvers \cite{Reu}).
\item There are no perfect $2$-code on $q$ symbols, for $172$ additional values of $q \leq 600$ (Cazorla Garcia \cite{CG}).
\end{itemize}

All but the last of these date back more than $40$ years; indeed the paucity of advances on these problems led Heden \cite{Hed} to observe in 2010 that ``it is remarkable that since 1982, there [is]  not
one single result in this area to be found, although still the cases $e = 2$ and $e = 1$
with $q$ arbitrary seem to be open''.
The results mentioned above all depend either upon elementary congruential arguments or upon machinery from Diophantine approximation. 
The special case where $q=2p$ for $p$ prime, has been singled out for consideration by van Lint \cite{VL1}, Reuvers \cite{Reu} and Cazorla Garcia \cite{CG}, who all note that their methods do not apply to the general case of arbitrary primes (and indeed $q=2 \cdot 47$ and $q =2 \cdot 83$ are the smallest values of $q$ that remain unresolved). 

Our main results in this paper answer somewhat more general questions. We will in fact show that there are no perfect $2$-codes on $q$ symbols, with, for example,  $q=2 p^\beta$, $q=4 p^\beta$ and $q=8 p^\beta$, for  $\beta$  a positive   integer and $p$ prime.
This is a consequence of

\begin{thm} \label{thm-mainx}
There exist at most finitely many  perfect $2$-codes on $q$ symbols, with $q=2^\alpha p^\beta$, for $\alpha$ and $\beta$  positive integers, and $p$ prime. If such a code exists, necessarily   $\alpha > 20$, $\beta \leq 2519$, $p>10^{10}$ and $p \equiv 3 \mod{8}$. 
\end{thm}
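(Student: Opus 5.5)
This theorem is really a structural statement about a Diophantine problem, so the plan is first to translate perfectness into arithmetic. The sphere-packing condition says that $|S(\mathbf{x},2)| = 1+n(q-1)+\binom{n}{2}(q-1)^2$ divides $q^n$. With $q=2^\alpha p^\beta$, this means
\begin{equation*}
2+2n(q-1)+n(n-1)(q-1)^2 = 2^{a} p^{b}
\end{equation*}
for some nonnegative integers $a,b$. Completing the square in $x=n(q-1)$, this becomes
\begin{equation*}
\bigl(2n(q-1)-(q-3)\bigr)^2 + (q-1)(q+3)\cdot\text{(correction)} \ldots
\end{equation*}
More precisely, we reach an equation of the form $X^2 + D = 2^{a+c} p^{b}$, where $X=(q-1)(2n-1)+2$ and $D=(q-1)(q+3)$ up to a fixed power of $2$.

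The second ingredient is Lloyd's theorem for $e=2$. It forces the Lloyd polynomial to have integer roots, which gives extra congruence conditions on $n$ modulo powers of $2$ and of $p$. In particular, the left-hand side is divisible by a controlled power of $2$, and we get strong local information.

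The plan then splits according to the sizes of the two exponents. First I would use the $2$-adic and $p$-adic valuations of $X^2+D$. Since $q\equiv 0 \pmod{2^\alpha}$ and $q \equiv 0\pmod{p^\beta}$, we have $D\equiv -3$ modulo both $2^\alpha$ and $p^\beta$. So $X^2\equiv 3$ modulo the relevant prime powers whenever the exponent exceeds these. This immediately gives quadratic-residue constraints.
\begin{itemize}
\item For $p\equiv 1,5 \pmod 8$, the constraints kill the case, as Reuvers essentially did.
\item For $p\equiv 7\pmod 8$, one needs a parity or quadratic-reciprocity argument on $3$ modulo $p$ combined with the $2$-part, where $3$ is a square mod $2^k$ only for small $k$. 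This is where $p\equiv 3\pmod 8$ survives.
\end{itemize}
Bounding $a$ (or forcing $a$ small) relative to $\alpha$ via the $2$-adic condition should leave $X^2+D = 2^{a}p^{b}$ with $a$ tiny. That makes the left side essentially a $p$-power times a bounded factor.

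Next comes the finiteness and bounds on $\beta$. Write $X^2 - c\,p^{b} = -D$ with $c\mid 2^{O(1)}$. Since $D\approx q^2$ while $X\approx nq$ is much larger, $X/\sqrt{c\,p^b}$ is extremely close to $1$. Applying Ridout's theorem (or a $p$-adic Thue–Siegel argument) to the approximation gives finiteness for fixed structure. For explicit bounds, I would use linear forms in logarithms, $p$-adic with respect to $2$ and archimedean, applied to $\Lambda = b\log p - 2\log X + \log c$, which is $\ll q^2/X^2$. Combining this with the relation $p^b \approx n^2q^2$ and the divisibility $p^\beta\mid X^2+3$-type information should bound $b$ in terms of $\beta$ and $\log p$. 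From this I would extract $\beta\le 2519$ and, via the congruence $n\equiv$ something mod $p^{\beta}$, force $n\ge p^\beta$ so that the linear-form bound fails unless $p$ is large ($p>10^{10}$) or $\alpha$ is large. The small cases $\alpha\le 20$ would be checked by finishing $2$-adically, with explicit Matveev/Bugeaud–Laurent constants and a computational sieve over $\alpha\le 20$ and small $p$.

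The main obstacle I expect is the $\alpha\le 20$ range and the lower bound $p>10^{10}$. These require explicit, sharp linear-forms estimates (Laurent's two-logarithm bounds) plus substantial computation. Overall finiteness for general $\alpha$ should instead come from the gap principle: for $\alpha$ large, the $2$-adic condition forces the exponents into incompatible ranges.
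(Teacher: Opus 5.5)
There is a genuine gap, and it starts in the setup. Your completed square is wrong. With $X=(q-1)(2n-1)+2$ one gets $X^2=8f(n,q)+(q-3)^2-8$, so the constant is $(q-3)^2-8\equiv 1 \mod{q}$, not $-(q-1)(q+3)$. Lloyd's theorem in fact gives $X=q(r_1+r_2)-1$, so the only congruence you obtain this way is $X^2\equiv 1\mod{q}$, which carries no quadratic-residue information. Your proposed elimination of $p\equiv 1,5,7\mod{8}$ therefore has nothing behind it. Reuvers's classical results also do not cover $p\equiv 5\mod{8}$ with $\alpha=1$ and $\beta$ odd.

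In the paper, the residue conditions come from reducing the identity $2^{\alpha-1}p^\beta(2^\kappa-2^\omega p^\gamma)^2=2^\kappa+2^\omega p^\gamma+2^{\alpha-1}p^\beta-3$ modulo $16$. The conclusion $p\equiv 3\mod{8}$ is reached only at the very end. The case $\alpha=1$ is the only one admitting $p\equiv 5,7\mod{8}$, and it is excluded together with the rest of $\alpha\le 20$. Then $\alpha\ge 2$ forces $\omega=0$, and hence $p\equiv 3\mod{8}$.

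More fundamentally, you use Lloyd's theorem only for congruences on $n$ and then attack $X^2-((q-3)^2-8)=2^{a+3}p^{b}$ directly. The constant there has size $q^2$ and grows with $p$. Your linear form $b\log p-2\log X+\log c$ involves $\log X$, where $X$ is not an $S$-unit and has unbounded height. A Baker/Matveev lower bound is then roughly $\log|\Lambda|>-C\log p\,\log X\,\log b$. This is far weaker than the trivial upper bound $\log|\Lambda|\approx -2\log(X/q)$, so no exponent gets bounded. Likewise, Ridout's theorem needs a fixed algebraic number, and $\sqrt{c\,p^{b}}$ with $p$ varying is not one.

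The missing idea is that the Lloyd roots $r_1<r_2$ are themselves $S$-units satisfying $q(r_2-r_1)^2=2(r_1+r_2)+q-6$. This forces $0<r_2-r_1<(6r_1/q)^{1/2}$. Since also $r_1+r_2\equiv 3\mod{p}$, it forces $\{r_1,r_2\}=\{2^\kappa,2^\omega p^\gamma\}$. The result is $|(\kappa-\omega)\log 2-\gamma\log p|\ll p^{-\gamma/2}$, a linear form in the two fixed numbers $2$ and $p$. For such a form Laurent's bound costs only a factor $\log p$, which cancels, giving $\gamma\le 5039$ absolutely.

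Every assertion of the theorem rests on this:
\begin{itemize}
\item $\beta\le 2519$ follows from $\gamma\ge 2\beta+1$, which is proved via $2^\kappa\equiv 3\mod{p^\beta}$ and the closeness of the roots.
\item $p>10^{10}$ follows from continued fractions of $\log p/\log 2$ with denominators at most $5039$.
\item $\alpha\le 20$ is excluded using the Schr\"oder--Hipparchus expansion of $2^\kappa$ modulo $p^{t\beta}$. This yields $p^\beta<25(6\cdot 2^\alpha)^{t-1}$, which is played off against $2$-adic Hensel lifting of the congruence the identity above imposes modulo $2^\kappa$.
\item Finiteness for $\alpha>20$ follows from Ridout applied to $2^{\kappa_0/\gamma}\approx p/2^{\delta}$, which is possible only because $\gamma$ is bounded, followed by Theorem~\ref{thm-main0}.
\end{itemize}
Your ``gap principle'' for large $\alpha$ is unsubstantiated and does not replace this last step.
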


The  bounds here  upon $\alpha$ and $\beta$  can be readily sharpened with effort. 
In the case of general $q$, we strengthen Theorem \ref{reu0} as follows.
\begin{thm} \label{thm-main0}
Let $P_0$ be a fixed positive integer. Then there are at most finitely many perfect $2$-codes on $q$ symbols with greatest prime factor $P(q)$ satisfying $P(q) \leq P_0$.
\end{thm}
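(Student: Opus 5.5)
We first reduce the existence of a perfect $2$-code to a Diophantine equation, and then treat that equation with $S$-unit tools. The sphere-packing condition says that $|S(\mathbf{x},2)| = 1 + n(q-1) + \binom{n}{2}(q-1)^2$ divides $q^n$. This sphere size is therefore a product of primes dividing $q$, all of which are at most $P_0$.

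Put $N = 1 + n(q-1) + \binom{n}{2}(q-1)^2$. Multiplying by $8$ and completing the square gives
$$
8N = \bigl(2(q-1)n - (q-3)\bigr)^2 + (q+1)^2 - 2(q-1)^2 \cdot \text{(correction)},
$$
which we tidy into the form $X^2 + D = 8N$, with $X$ linear in $n$ and $D = -q^2+6q-1$ (or similar) a quadratic in $q$. We also bring in Lloyd's theorem, as used by Reuvers: the Lloyd polynomial of degree $2$ must have integer roots. This forces $n$ to lie in a quadratic relation with $q$, so that, roughly, $(2(q-1)n-q+3)^2 - (q-1)(q+1)\cdot(\ldots)$ is a square. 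Combining the two conditions, we aim for an equation of the shape
$$
x^2 - (q-1)\,y^2 \cdot c = \pm 2^a \prod_{p \mid q} p^{a_p},
$$
or equivalently $N = \prod_{p\mid q} p^{a_p}$ with the integer-root parametrization of $n$ in terms of $q$ and an auxiliary integer $m$.

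The key point is that the right-hand side is an $S$-unit for $S = \{p \le P_0\}$, while $q$ itself is also an $S$-unit. We therefore write $q = \prod_{p\le P_0} p^{b_p}$ and get an equation
$$
F(q, m) = \prod_{p \le P_0} p^{a_p},
$$
where $F$ is a fixed polynomial and both $q$ and the right side are $S$-units. The plan is to recast this as a unit equation $u + v + w = 0$, or a sum of few $S$-units with fixed coefficients, in a fixed number field. The obvious candidate is $\mathbb{Q}(\sqrt{2})$, arising from the factorization $x^2-2y^2$ of the norm form. We then apply the Subspace Theorem, or Ridout's theorem (the $p$-adic Roth theorem) as the keywords suggest, to conclude that there are only finitely many solutions outside degenerate families.

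Concretely, we would use Ridout as follows. From $N \mid q^n$ and the explicit form of $N$, an approximation such as $n \approx \sqrt{2N}/(q-1)$ combined with Lloyd's integrality gives, for each $p \mid q$, a very good $p$-adic approximation of a fixed algebraic number by a rational whose numerator and denominator are $S$-units times bounded factors. Ridout's theorem then bounds $q$.

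Finally, with $q$ bounded, Theorem~\ref{reu0} gives finitely many codes for each such $q$, and hence finitely many in total.

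The main obstacle is to check that the degenerate cases really are excluded: vanishing subsums in the unit equation, or parametric families in which $N$ and $q$ are simultaneously $S$-units for infinitely many $q$. Each such family must be ruled out by hand, either by congruences or by showing it contradicts the divisibility $N \mid q^n$ with $\gcd(N, q-1)=1$. The other delicate step is to verify that the $p$-adic approximation exponent genuinely exceeds $2$, which is what Ridout's theorem requires.
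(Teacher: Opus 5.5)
There is a genuine gap. The step that is supposed to carry the argument is never set up, and as sketched it cannot be made uniform in $q$.

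Your completed square is correct: $8N=X^2-(q^2-6q+1)$ with $X=2(q-1)n-(q-3)$. But $X$ is an arbitrary integer, not an $S$-unit, so this is not an $S$-unit equation with fixed coefficients. The $\mathbb{Q}(\sqrt2)$ norm form is asserted, but no unit equation is actually extracted from it. The ``fixed algebraic number'' to which Ridout is to be applied is never identified. Any target you could extract from $X^2\equiv q^2-6q+1 \mod{N}$, or from $n\approx\sqrt{2N}/(q-1)$, moves with $q$. Ridout's theorem and the Subspace theorem give finiteness only for a fixed target (fixed coefficients) and a fixed $S$. So ``Ridout's theorem then bounds $q$'' does not follow from anything you have written.

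The missing idea is to use the Lloyd roots themselves as the $S$-units.
\begin{itemize}
\item Since $r_1r_2=2N/q^2$ and $N\mid q^n$, both $r_1$ and $r_2$ are $S$-units for $S=\{p : p\mid 2q\}\subseteq\{p\le \max(P_0,2)\}$.
\item Your $X$ and $8N$ are exactly $q(r_1+r_2)-1$ and $4q^2r_1r_2$. Substituting turns your equation into Reuvers' identity $q(r_2-r_1)^2=2(r_1+r_2)+q-6$.
\item Combined with $r_1\ge q/2$ (a consequence of the sum-of-roots formula and $n>2$), this gives $0<r_2-r_1<(6r_1/q)^{1/2}$, which is Proposition~\ref{Goop}. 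So two $S$-units are abnormally close.
\end{itemize}

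The paper then invokes Tijdeman's effective bound $r_2-r_1>r_1/\log^{\kappa}r_1$. This gives $r_1q<6\log^{2\kappa}r_1$, which bounds $r_1$, $r_2$ and $q$, and then $n$ via $r_1+r_2=(2-2/q)(n-2)+3$.

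Your preferred tool also works once this is in place. Write $r_2=dx$, $r_1=dy$ with $\gcd(x,y)=1$. Then $qd(x-y)^2<6y$, so $|x/y-1|\ll y^{-1/2}$ with $x,y$ both $S$-units. This is the case $\alpha=1$, $\mu=\nu=0$ of Theorem~\ref{Riddy}, so there are only finitely many pairs $(x,y)$. Then $qd<6y/(x-y)^2$ bounds $q$ and $d$. This is ineffective, but that suffices for the statement, and your worry about the approximation exponent disappears. Likewise, your unit-equation plan becomes meaningful only after this substitution, since $qr_1^2+qr_2^2-2qr_1r_2-2r_1-2r_2-q+6=0$ is a genuine $S$-unit equation with fixed coefficients.
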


For small values of $P_0$, it is relatively straightforward to make this result completely explicit. For instance, we have (as essentially a corollary of  classical work of de Weger \cite{dW})
\begin{thm} \label{thm-main1}
There exist no perfect $2$-codes on $q$ symbols with greatest prime factor $P(q)$ satisfying $P(q) \leq 13$.
\end{thm}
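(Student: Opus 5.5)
Since $q$ has all prime factors at most $13$, the first step is to recall the arithmetic conditions that a perfect $2$-code on $q$ symbols with length $n$ forces. The sphere-packing condition says that
$$
|S({\bf x},2)| = 1 + n(q-1) + \binom{n}{2}(q-1)^2
$$
divides $q^n$. Hence this quantity is a product of primes dividing $q$, i.e.\ an $S$-unit for $S=\{2,3,5,7,11,13\}$.

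Completing the square, multiply by $8$ and set $x = 2n(q-1) - q + 3$ (with the sign convention adjusted as needed). The condition then becomes
$$
x^2 + (q-1)(q-3) \cdot(\text{const}) = 8\cdot(\text{$S$-unit}),
$$
or, in the more standard form, $(2(q-1)n - q+3)^2 + (q-1)(q+3)-\dots$. Rather than fixing the constant precisely here, I would use the Lloyd-type normalization from Reuvers. This writes the sphere size as $\tfrac12\big((q-1)n - c\big)^2 + d$, with $c,d$ depending only on $q$, so the problem reduces to an equation of the shape $X^2 + D = 2^a 3^b 5^c 7^d 11^e 13^f$.

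The second step is the key extra constraint: Lloyd's theorem. For a perfect $2$-code, the Lloyd polynomial $L_2(x)$ must have two distinct integer roots in $\{1,\dots,n\}$. Equivalently, with $x_1,x_2$ the roots, we have $x_1+x_2$ and $x_1x_2$ expressed in terms of $n,q$, and the sphere size factors as a product involving $x_1 x_2$ times a power-of-$q$-divisor. Concretely (this is the route of Reuvers and of Bassalygo et al.), one obtains the conditions that $x_1 x_2 \cdot q^2/2$ and related quantities are $S$-units. This leads to a sum of $S$-units: an equation
$$
u + v = w^2 \quad\text{or}\quad u + v = w
$$
with $u,v,w$ composed only of primes up to $13$. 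De Weger's classical work solved completely the $S$-unit equations $x+y=z$ and $x+y=z^2$ with $S=\{2,3,5,7,11,13\}$ via linear forms in logarithms and lattice reduction. I would therefore translate the Lloyd-root conditions into precisely one of de Weger's equations and read off the finite list of solutions.

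The final step is to run through de Weger's finite list. For each solution, reconstruct the candidate $(n,q)$, check whether $q$ is indeed a non-prime-power with $P(q)\le 13$ and $n$ is a positive integer, and verify that the sphere-packing divisibility and Lloyd integrality both hold. I expect all candidates either to give prime-power $q$ (recovering the ternary Golay code $(11,3)$) or to fail one of the conditions. The main obstacle is the reduction step: deriving from the Lloyd conditions an equation whose unknowns are genuinely $S$-units, with the coprimality and exact shape needed to match de Weger's tables. Extraneous factors of $2$ arising from halving or from $\gcd(x_1,x_2)$ must be controlled. I would handle this by splitting on the $2$-adic and small-prime valuations of $x_1,x_2$, using that $x_1x_2$ divides a power of $q$ times a bounded factor. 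If some case lands on a form not covered by de Weger, I would fall back on a direct finite computation using the bounds from Theorem \ref{reu0}-type arguments.
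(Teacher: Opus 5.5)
There is a genuine gap, and it is at the step you yourself flag as the main obstacle. The Lloyd conditions do not reduce to a three-term $S$-unit equation $u+v=w$ or $u+v=w^2$, and your proposal never produces one. Lloyd's theorem gives that the roots $r_1<r_2$ are $S$-units ($S$ the primes dividing $2q$), with $r_1r_2=2f(n,q)/q^2$. Eliminating $n$ from the sum and product relations yields $q(r_2-r_1)^2=2(r_1+r_2)+q-6$.

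Neither this identity nor your completed square is a three-term $S$-unit relation. Your completed square reads $X^2=4q^2r_1r_2+q^2-6q+1$ with $X=q(r_1+r_2)-1$. Here $r_2-r_1$ need not be an $S$-unit, and $q^2-6q+1$ is coprime to $q$ and varies with $q$. Since $q$ runs over infinitely many $13$-smooth integers, your Step 1 gives infinitely many Ramanujan--Nagell type equations, not one finite table. The fallback to Theorem \ref{reu0} gives finiteness only for each fixed $q$, so it cannot finish the argument either.

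The missing idea is that this identity forces two $S$-units to be abnormally close. From $n\equiv 2$ modulo $q/2$ or $q$, and $n>2$, one gets $r_1\ge q/2$. The identity then gives $0<r_2-r_1<(6r_1/q)^{1/2}$. Writing $r_2=dx$, $r_1=dy$ with $\gcd(x,y)=1$ gives coprime $13$-smooth $x,y$ with $0<x-y<y^{1/2}$. The de Weger result you need is his complete solution of this \emph{inequality} (605 pairs), not of $x+y=z$ or $x+y=z^2$.

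You must then still control $d$. Since $r_1+r_2\equiv 3 \mod{p}$ for every prime $p>3$ dividing $q$, one has $P(d)\le 3$. Combining $qd^{1/2}<6^{1/2}y^{1/2}/(x-y)$ with the fact that every odd prime dividing $xy$ divides $q$ rules out all but a handful of pairs, each with tiny $d$. These are then eliminated by $2d(x+y)\equiv 6 \mod{q}$, together with the identity above and the formula for $r_1+r_2$ in terms of $n$ and $q$.
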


Generalizing both Theorems \ref{thm-mainx} and \ref{thm-main0}, at the cost of (in the latter case)  effectivity, we prove a qualitative but inexplicit  result.
 
 \begin{thm} \label{thm-main!}
Let $P_0$ be a fixed positive integer. Then there are at most finitely many perfect $2$-codes on $q$ symbols, where $q=q_0 p^\alpha$,  $p$ is prime, $\alpha$ is a nonnegative integer, and $q_0$ is a positive integer with greatest prime factor $P(q_0)$ satisfying $P(q_0) \leq P_0$.
\end{thm}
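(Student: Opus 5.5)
The plan is to turn the existence of a perfect $2$-code into a Diophantine equation whose unknowns are mostly $S$-units for a fixed set of primes $S$, plus the single varying prime $p$. Then I would use a $p$-adic Diophantine approximation result (Ridout's theorem, in the spirit of the proof of Theorem~\ref{thm-main0}) to get finiteness. Because Ridout's theorem is ineffective, the result is only qualitative, as stated.

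\emph{Step 1: the arithmetic conditions.} Let $\mathcal{C}$ be a perfect $2$-code of length $n$ over an alphabet of size $q=q_0p^\alpha$. The sphere-packing condition gives
$$
1+n(q-1)+\binom{n}{2}(q-1)^2=q^k
$$
for some integer $k$. Completing the square, this reads
$$
\bigl(2n(q-1)-q+3\bigr)^2=8q^k+(q-3)^2-8 .
$$
Lloyd's theorem says the degree-$2$ Lloyd polynomial $L_2(x)$ has two distinct integer roots $x_1,x_2$ in $[1,n]$. Its constant term is the sphere size and its leading coefficient is $q^2/2$. Hence
$$
x_1x_2=2q^{k-2},\qquad x_1+x_2=\frac{(2n-1)(q-1)+3}{q}\cdot(\text{explicit}) ,
$$
that is, $x_1+x_2$ is an explicit linear expression in $n$ with coefficients depending on $q$. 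Since $x_1x_2=2q^{k-2}$, each root is composed only of primes in $S=\{\ell\le P_0\}\cup\{p\}$. So $x_1=u_1p^{s_1}$ and $x_2=u_2p^{s_2}$, with $u_i$ being $P_0$-smooth and $u_1u_2p^{s_1+s_2}=2q_0^{k-2}p^{\alpha(k-2)}$.

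\emph{Step 2: eliminate $n$.} Combining the sum relation with the sphere-packing identity gives a relation of the shape $(x_1-x_2)^2\approx$ (a small expression in $q$ and the $x_i$). More precisely, after eliminating $n$ one gets a polynomial identity $F(x_1,x_2,q)=0$ with $F$ of low degree in $q$. I expect, as in Reuvers' treatment, that $x_1$ and $x_2$ must be close: $|x_1-x_2|\ll q^{O(1)}$ while $x_1x_2=2q^{k-2}$ is huge. Because $p\mid q$, the power of $p$ divides one root far more than the other only if $p$ is small relative to $x_i$. Writing $x_1/x_2=(u_1/u_2)p^{s_1-s_2}$, this means a ratio of $S$-units approximates a fixed algebraic number (essentially $1$, or a root of a quadratic coming from $F$) extremely well. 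If $\alpha=0$, or $p\le P_0$, we are in Theorem~\ref{thm-main0}; so assume $p>P_0$ and $\alpha\ge1$.

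\emph{Step 3: Ridout, the main obstacle.} The difficulty is that $p$ varies, so $S$ is not fixed, and Ridout's theorem cannot be applied directly to $x_1/x_2$. I would first exploit that $p^\alpha\mid q$ and that the relation $F=0$ reduced modulo high powers of $p$ pins down $p$-adic information. Specifically, I would show that one of $x_1,x_2$ is divisible by essentially all of $p^{\alpha(k-2)}$, since splitting the $p$-power between them contradicts $x_1+x_2\equiv c\pmod{p^\alpha}$ for a small constant $c$. Then $x_2=u_2$ (times a bounded power of $p$) is $P_0$-smooth, and the equation becomes
$$
u_1p^{s}+u_2=G(q), \qquad\text{with}\quad u_1u_2p^{s}=2q^{k-2}.
$$
Substituting $q=q_0p^\alpha$ and dividing by the $p$-power yields an approximation $|u_2/(\text{smooth})-\theta|$ in which all fixed-$S$ numerators and denominators are $P_0$-smooth, while $p$ enters only through a bounded-degree expression. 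Treating $p^\alpha$ as the large real approximation parameter and the $\ell$-adic valuations for $\ell\le P_0$ as the other places, Ridout's theorem (with $S=\{\ell\le P_0\}$ fixed) gives only finitely many possibilities for the smooth parts relative to $p$. This bounds $k$ and then $p$, and Theorem~\ref{reu0} finishes each remaining $q$.

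I expect the real work to be making Step 3's valuation argument airtight, so that after removing $p$ one genuinely lands in a fixed-$S$ Ridout setting with exponent strictly greater than $1$.
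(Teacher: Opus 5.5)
\paragraph{The main gap: the exponent of $p$ is never bounded.} Your Step 3 has a genuine gap, and it is exactly where the difficulty of the theorem lies. You correctly observe that $p$ divides at most one root, since $q(r_2-r_1)^2=2(r_1+r_2)+q-6$ forces $r_1+r_2\equiv 3 \mod{p}$. So one root is a $P_0$-smooth $q_1$ and the other is $q_2p^\gamma$ with $q_2$ $P_0$-smooth. But nothing in your argument bounds $\gamma$, and without such a bound Ridout's theorem gives nothing.

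\paragraph{Why Ridout fails without it.} In the approximation $q_2p^\gamma/q_1\approx 1$, the part of the numerator prime to the fixed primes is $p^\gamma$. This may be almost the whole numerator, so $\mu\approx 1$. Meanwhile Proposition~\ref{Goop} only gives $|q_2p^\gamma/q_1-1|<(6/(q\,q_1))^{1/2}$, an approximation exponent near $1/2$ when $q$ is small compared with $q_1$. Your phrases ``treating $p^\alpha$ as the large real approximation parameter'' and ``$p$ enters only through a bounded-degree expression'' presuppose exactly the boundedness of the $p$-exponent that is missing.

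\paragraph{How the paper closes it.} The paper supplies two ideas.
\begin{enumerate}
\item \emph{Baker's method bounds $\gamma$.} Let $\ell^\kappa$ be the largest prime power dividing $r_1r_2/d^2$, with $d=\gcd(q_1,q_2)$. Then $|r_2/r_1-1|<\ell^{-\kappa/2}$. Matveev's bound (Theorem~\ref{Mat}), in which $p$ enters only through $\log A_n=\log p$, gives $\theta\gamma\log p\le c_1\log p\log(c_2\theta\gamma)$. The $\log p$ cancels, so $\gamma\le\gamma_0(P_0)$.
\item \emph{For each fixed $\gamma$, extract $\gamma$-th roots.} Write $q_1=\prod Q_i^{b_i}$ and $q_2=\prod P_i^{a_i}$. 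The inequality becomes an approximation to one of at most $\gamma^{\pi(P_0)}$ fixed algebraic numbers $\prod Q_i^{\{b_i/\gamma\}}/\prod P_i^{\{a_i/\gamma\}}$. The approximating fraction has $P_0$-smooth denominator $D\le q_1^{1/\gamma}$, its numerator has non-smooth part exactly $p$, and the error is $\ll D^{-\gamma/2}$. This beats $\mu+\nu=1$ when $\gamma\ge 3$. For $\gamma\in\{1,2\}$, one uses $q_1-q_2p^\gamma=3+mp$ with $m\neq 0$ to get $q_2>p^{3-\gamma}/7$, which allows $\mu=1/3$ or $2/3$ in Theorem~\ref{Riddy}.
\end{enumerate}

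\paragraph{A false premise in Step 1.} For an arbitrary alphabet, the sphere-packing condition gives only $f(n,q)\mid q^n$, because $|\mathcal{C}|$ need not be a power of $q$ when $q$ is not a prime power. So the following are unfounded:
\begin{itemize}
\item $x_1x_2=2q^{k-2}$;
\item the claim that one root carries $p^{\alpha(k-2)}$;
\item the relation $u_1p^s+u_2=G(q)$.
\end{itemize}
What is true is $r_1r_2=2f(n,q)/q^2$, so the roots are $S$-units for $S$ the primes dividing $2q$. Together with the exact identity above, this gives $0<r_2-r_1<(6r_1/q)^{1/2}$, not $|x_1-x_2|\ll q^{O(1)}$. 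After these corrections the $p$-freeness of one root survives. However, the exponent of $p$ in the other root is genuinely unknown, which is why the linear-forms-in-logarithms step cannot be skipped.
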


Our proofs depend upon a wide variety of results from Diophantine approximation, effective, ineffective and explicit. By way of example, the proofs of Theorems \ref{thm-main0} and \ref{thm-main1} rely upon bounds for linear forms in complex logarithms and upon lattice-basis reduction techniques, so that, for instance, the implicit bounds upon the parameters for putative codes in Theorem \ref{thm-main0} can be made completely explicit in terms of $P_0$ (which is not the case for the original proof of Theorem \ref{reu0}). Our approach for these results is significantly more
computationally efficient than that employed in \cite{CG}, where the problem of determining perfect $2$-codes over an alphabet of size $q$ is reduced to that of finding integral points $(x,y)$ on  $3^{\omega (q)}$ Mordell curves of the shape $y^2=x^3+k$ (here, $\omega (q)$ denotes the number of distinct prime divisors of $q$), via bounds for linear forms in elliptic logarithms. In this situation, one must compute corresponding Mordell-Weil bases for these curves, where $k \gg q^6$, a very challenging problem for even modest values of $q$. The proof of Theorem \ref{thm-mainx} is rather more intricate, combining bounds for linear forms in  logarithms, the $p$-adic version of Roth's theorem, and a variety of computational arguments. The Schr\"oder-Hipparchus numbers from classical combinatorics even make an unexpected appearance.

\section{Preliminaries and low-hanging fruit} \label{Sec2}

The {\it sphere packing bound} is the necessary condition for the existence of a perfect $e$-code with alphabet of size $q$, that,  for a word ${\bf{x}} \in V$ and code $\mathcal{C}$, we have
$$
 |\mathcal{C} | |S({\bf{x}},e)| =  |\mathcal{C} | \sum_{i=0}^e \binom{n}{i} (q-1)^i 
  =q^n.
 $$
 A further necessary condition for the existence of a perfect $e$-code with alphabet of size $q$, which has, in practice, proven much more useful for classifying such codes, is {\it Lloyd's theorem}, 
 which states that the polynomial
 $$
 \sum_{i=0}^e (-1)^i (q-1)^{e-i} \binom{x-1}{i} \binom{n-x}{e-i}
 $$
 has $e$ distinct integer roots in the interval $[1,n]$. In the context of $q$ which are not prime powers, this was first proved by Lenstra  \cite{Len}. 
 
 Some of the earliest results on perfect codes were deduced by specializing the sphere packing bound to obtain polynomial-exponential equations amenable to Diophantine analysis, such as the classical Ramanujan-Nagell equation, corresponding to $e=q=2$ (see e.g. \cite{Alt}, \cite{Coh}, \cite{Gol} and \cite{SS} for related papers). In most situations, as it transpires, Lloyd's theorem has proven to be of substantially greater utility for classifying perfect codes. Our work combines these two results.
 
For the case $e=2$, we have,
from the sphere packing bound, that
\begin{equation} \label{sphere}
f(n,q)=\frac{n(n-1)(q-1)^2}{2}+n(q-1)+1 \mid q^n.
\end{equation}
Lloyd's theorem, in this situation, implies that the polynomial
\begin{equation} \label{poly}
x^2 - \left( \left( 2 -\frac{2}{q} \right) (n-2) +3 \right)x+\frac{2f(n,q)}{q^2}
\end{equation}
has two distinct integer roots $1 \leq r_1 < r_2 \leq n$, so that
\begin{equation} \label{roots1}
r_1+r_2=\left( 2 -\frac{2}{q} \right) (n-2) +3
\end{equation}
and
\begin{equation} \label{roots2}
r_1 \cdot r_2=\frac{2f(n,q)}{q^2},
\end{equation}
and, in particular, and crucially for our arguments, we have that $r_1$ and $r_2$ are $S$-units, where, from (\ref{sphere}), $S$ is the set of primes dividing $2q$.

Since
$$
q(r_1+r_2)=2(q-1)(n-2)+3q=2n(q-1)+4-q,
$$
it follows that
$$
(q(r_1+r_2)-1)^2+8-(q-3)^2=8f(n,q),
$$
whence
$$
q^2(r_1+r_2)^2-2q(r_1+r_2)+6q-q^2=4 r_1 r_2 q^2,
$$
i.e.
$$
q r_1^2+q r_2^2-2(r_1+r_2)+6-q=2 r_1 r_2 q.
$$
We therefore have
\begin{equation} \label{good}
q (r_2-r_1)^2 = 2(r_1+r_2) +q-6.
\end{equation}

This very simple equation lies at the heart of our arguments; it was first observed by Reuvers  (see Lemma 3.2.7 of \cite{Reu}).
We note that this equation has solutions 
\begin{equation} \label{exxies}
(q,r_1,r_2) = (2,2,4), (10,8,10) \; \mbox{ and } \; \left( \frac{3^t-3}{2}, 3^t-3,3^t \right), \; t \geq 2,
\end{equation}
with every prime divisor of $r_1$ and $r_2$ dividing $2q$. We know of no others.
Notice that, if $(q,r_1,r_2) = \left( \frac{3^t-3}{2}, 3^t-3,3^t \right)$, equation (\ref{roots1}) implies that
$$
2q^2 = (q-1) (n-2),
$$
whence $t=2$, $q=3$ and $n=11$, corresponding to the ternary Golay code. If $(q,r_1,r_2) = (2,2,4)$, from (\ref{roots1}), we have
$n=5$, while $(q,r_1,r_2) = (10,8,10)$ yields $n=31/3$, a contradiction.

Fundamental to our arguments is the fact that the existence of a perfect $2$-code on $q$ symbols implies that the corresponding $S$-units $r_1$ and $r_2$ (where $S$ is the set of primes dividing $2q$) are unusually close together. Specifically, we have the following.
\begin{prop} \label{Goop}
If there exists a perfect $2$-code on $q$ symbols, with words of length $n>2$, then the (integer) roots $r_1<r_2$ of the polynomial (\ref{poly}) satisfy
\begin{equation} \label{size}
0 < r_2-r_1 < \left( \frac{6r_1}{q} \right)^{1/2}.
\end{equation}
\end{prop}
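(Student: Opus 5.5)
The plan is to derive \eqref{size} directly from the key identity \eqref{good}, treating the difference $d = r_2 - r_1$ as the main variable. The lower bound $0 < r_2 - r_1$ comes straight from Lloyd's theorem, which gives two \emph{distinct} integer roots $r_1 < r_2$ of \eqref{poly}. So $d$ is a positive integer, and all the work goes into the upper bound.

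First I rewrite \eqref{good} using $r_2 = r_1 + d$:
$$
q d^2 = 4 r_1 + 2d + q - 6,
\qquad\text{so}\qquad
r_1 = \frac{q d^2 - 2d - q + 6}{4}.
$$
Next I reformulate the target. The desired inequality $d < (6r_1/q)^{1/2}$ is equivalent to $q d^2 < 6 r_1$. By \eqref{good}, this is the same as $4r_1 + 2d + q - 6 < 6 r_1$, that is, $2r_1 > 2d + q - 6$. Substituting the expression for $r_1$ and clearing denominators, this becomes
$$
q(d^2 - 3) > 6(d - 3).
$$
It therefore suffices to verify this polynomial inequality for every admissible positive integer $d$ and every $q \geq 2$. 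I split into cases by the value of $d$.

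\emph{Case $d = 2$.} The left side is $q$ and the right side is $-6$, so the inequality holds trivially.

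\emph{Case $d \geq 3$.} Since $d^2 - 3 > 0$ and $q \geq 2$, we have $q(d^2-3) \geq 2(d^2-3)$. Moreover $2(d^2-3) - 6(d-3) = 2d^2 - 6d + 12 = 2\bigl((d - \tfrac{3}{2})^2 + \tfrac{15}{4}\bigr) > 0$, so the inequality holds.

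\emph{Case $d = 1$.} This is the one genuine obstacle. Here the inequality reads $-2q > -12$, which fails once $q \geq 6$, so I must show that $d = 1$ cannot occur at all. Putting $d = 1$ into \eqref{good} gives $q = 4r_1 + 2 + q - 6$, which forces $r_1 = 1$ and hence $r_2 = 2$. Then \eqref{roots1} reads
$$
3 = \left(2 - \frac{2}{q}\right)(n-2) + 3,
$$
so $(2 - 2/q)(n-2) = 0$. Since $q \geq 2$ the factor $2 - 2/q$ is nonzero, so $n = 2$. This contradicts the hypothesis $n > 2$, so $d = 1$ is excluded.

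This is exactly where the hypothesis $n > 2$ enters. With $d = 1$ ruled out and the inequality established for all $d \geq 2$, the upper bound in \eqref{size} follows.
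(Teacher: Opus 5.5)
Your proof is correct, and it takes a genuinely different and more economical route than the paper's. The paper first derives the lower bound $r_1 \geq q/2$. It gets this from the congruence $n \equiv 2 \mod{q/2}$ (for even $q$; modulo $q$ for odd $q$) forced by (\ref{roots1}), together with $r_2 \leq n$. It then argues by contradiction, writing $r_2 = r_1 + \kappa r_1^{1/2}/\sqrt{q}$ with $\kappa \geq \sqrt{6}$ and showing that (\ref{good}) cannot accommodate a gap that large. That argument also assumes $q \geq 6$ (hence $n \geq 5$), so the small prime-power alphabets are tacitly left to the known classification.

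You instead treat (\ref{good}) as an exact identity and eliminate $r_1$, so that (\ref{size}) becomes equivalent to $q(d^2-3) > 6(d-3)$. This holds for every $d \geq 2$ and $q \geq 2$. The hypothesis $n>2$ is then needed only to rule out $(r_1,r_2)=(1,2)$, which by (\ref{roots1}) is exactly the trivial length $n=2$. Your route works uniformly for all $q \geq 2$, dispenses with $r_2 \leq n$ and with any congruence on $n$, and isolates precisely where $n>2$ enters.

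The one by-product of the paper's argument that you do not state is the bound $r_1 \geq q/2$, which is reused later in the proof of Theorem \ref{thm-main0}. Your identity $4r_1 = q(d^2-1)-2d+6$, which is increasing in $d$, gives it at once, and in the stronger form $r_1 \geq (3q+2)/4$ once $d \geq 2$.
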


\begin{proof}
Since $n > 2$,  from (\ref{roots1}),  $n \equiv 2 \mod{q/2}$ if $q$ is even, and  $n \equiv 2 \mod{q}$ if $q$ is odd. Further, from (\ref{roots1}),
$$
r_1+r_2 > \left( 2 -\frac{2}{q} \right) n  -1 
$$
and hence, from $r_2 \leq n$, 
\begin{equation} \label{r1low}
r_1 > \left( 1 -\frac{2}{q} \right) n  -1 > 2,
\end{equation}
where we have used only that $q \geq 6$ and $n \geq 5$. Suppose then that (\ref{size}) fails to hold, so that we may write $r_2=r_1+\frac{\kappa}{\sqrt{q}} \,  r_1^{1/2}$, with $\kappa \geq \sqrt{6}$. Then (\ref{good}) implies that
$$
\kappa^2  \, r_1 = 4 r_1+ \frac{2 \kappa}{\sqrt{q}} r_1^{1/2} +q-6,
$$
whence
\begin{equation} \label{what}
\frac{2}{\sqrt{6}} \leq \frac{\kappa^2-4}{\kappa} = \frac{2}{\sqrt{q r_1}} + \frac{q-6}{\kappa r_1}.
\end{equation}
If $q$ is even, $n \equiv 2 \mod{q/2}$ and $n > 2$ imply that  $n \geq 2+q/2$. Thus
$$
r_1 > \left( 1 -\frac{2}{q} \right) n  -1 \geq 
\left( 1 -\frac{2}{q} \right) (2+q/2)-1 =q/2-4/q
$$
and so $r_1 \geq q/2$. If $q$ is odd, $n \equiv 2 \mod{q}$, whence
we have $n \geq 2+q$ and so 
$$
r_1 > \left( 1 -\frac{2}{q} \right) n  -1 \geq 
\left( 1 -\frac{2}{q} \right) (2+q)-1 =q-4/q-1,
$$
and again, crudely, $r_1 \geq q/2$. Thus, from (\ref{what}) and $\kappa \geq 6$, 
$$
\frac{2}{\sqrt{6}}  
< \frac{2\sqrt{2}}{q} + \frac{2}{\sqrt{6}}  -\frac{12}{\sqrt{6} q},
$$
a contradiction.
\end{proof}

The proofs of Theorems \ref{thm-main0} and \ref{thm-main1} are now almost immediate. To prove Theorem  \ref{thm-main0}, note that we may assume $q \geq 10$, so that, from the proof of Proposition \ref{Goop}, $r_1 \geq q/2 > 4$. Theorem 1 of Tijdeman \cite{Tij} thus implies that
$$
r_2-r_1 > \frac{r_1}{\log^\kappa (r_1)},
$$
for an effectively computable positive constant $\kappa$ that depends only on the greatest prime divisor of $q$, $P(q)$. Combining this with (\ref{size}), we find that
$$
r_1 \, q < 6 \, \log^{2\kappa} (r_1),
$$
whence $r_1$ (and hence from (\ref{size}), $r_2$) are effectively bounded in terms of $P(q)$, proving Theorem  \ref{thm-main0}. Theorem \ref{thm-main1} follows upon combining Proposition \ref{Goop} with Theorem 4.6 of de Weger \cite{dW}, an explicit version of a special case of Theorem 1 of \cite{Tij} :
\begin{thm}[de Weger] \label{thm-dW}
There are precisely $605$ pairs of coprime positive integers $(x,y)$ with $P(x) \leq 13$, $P(y) \leq 13$ and
$$
0 < x-y < y^{1/2}.
$$
If, for a positive integer $m$, we write $\nu_p (m)$ for the largest nonnegative integer $k$ for which $p^k \mid m$, then $571$ of these solutions satisfy
$$
\nu_2(xy) \leq 19, \; \nu_3(xy) \leq 12, \; \nu_5(xy) \leq 8, \; \nu_7(xy) \leq 7, \; \nu_{11}(xy) \leq 5 \; \mbox{ and } \; \nu_{13}(xy) \leq 5. 
$$
The remaining $34$ solutions each satisfy all but one of these inequalities for $\nu_p(xy)$, except for
$$
\begin{array}{ll}
(x,y) \; \in & \left\{ (5767168,5764801), (78125000,78121827),  
(781258401,78125000), \right. \\
& \left.  (4882812500,4882786447), 
(13841287201,13841203200),  \right. \\
& \left. (26103515625,26103383072), (26736398612,26736328125), \right. \\
& \left. (13051691536000,13051688172831) \right\}.
\end{array}
$$
In all cases, we have
$$
\nu_2(xy) \leq 26, \; \nu_3(xy) \leq 19, \; \nu_5(xy) \leq 13, \; \nu_7(xy) \leq 13, \; \nu_{11}(xy) \leq 7 \; \mbox{ and } \; \nu_{13}(xy) \leq 8. 
$$
\end{thm}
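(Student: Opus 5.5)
The plan is to run the standard effective program: a lower bound for a linear form in complex logarithms, a lattice-basis reduction to bring the resulting huge bound down to something searchable, and then an exhaustive enumeration, from which the list of $605$ pairs and the stated valuation bounds are read off. Write $S=\{2,3,5,7,11,13\}$ and let $(x,y)$ be coprime with $P(x),P(y)\le 13$ and $0<x-y<y^{1/2}$. Then
$$
x=\prod_{p\in S} p^{a_p}, \qquad y=\prod_{p\in S}p^{b_p},
$$
with $a_pb_p=0$ for each $p$ by coprimality. Put $c_p=a_p-b_p$ and $B=\max_p |c_p|$.

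First I set up the linear form
$$
\Lambda=\log(x/y)=\sum_{p\in S}c_p\log p .
$$
From $0<x-y<y^{1/2}$ we get $0<\Lambda<\log(1+y^{-1/2})<y^{-1/2}$. Since $x$ and $y$ are close and every prime occurs in only one of them, we have $\log y\ge \log(x/2)$ and $\log x\ge \max_p a_p\log p$, so $\log y\ge B\log 2-\log 2$. This gives
$$
0<\Lambda<2^{(1-B)/2},
$$
an exponentially small upper bound in $B$. Note that the difference $x-y$ need not be $S$-smooth, so only the archimedean form is needed; no $p$-adic logarithms enter.

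Second, I apply Baker's theory, for instance Matveev's theorem or Baker--W\"ustholz, to $\Lambda$. The logarithms $\log 2,\dots,\log 13$ are multiplicatively independent, so $\Lambda\neq 0$, and we get a lower bound
$$
\log\Lambda>-C\log(eB),
$$
with $C$ of size roughly $10^{13}$--$10^{15}$ for six logarithms. Comparing this with the upper bound from the first step gives an absolute bound $B<B_0$, with $B_0$ on the order of $10^{16}$.

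Third, I reduce $B_0$ by de Weger's method. Consider the lattice generated by the columns of the matrix whose first five rows are the identity on $(c_2,\dots,c_{11})$ and whose last row is $[\Gamma\log p]$ for a large scaling constant $\Gamma\approx B_0^{6}$. An LLL-reduced basis gives a lower bound for the length of short lattice vectors. Since $|\Lambda|$ is tiny, this forces $\Gamma\Lambda$ to be not too small unless $B$ is small, and yields a new bound of order $\log \Gamma$, i.e. a few hundred. Iterating once or twice with smaller $\Gamma$ should bring the bound down to around $B\le 30$ or so.

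Finally, I enumerate all remaining exponent vectors, and this is where I expect the main difficulty, since it is computational. A naive search over six exponents up to about $30$ is on the order of $10^{9}$ candidates, which is feasible but wasteful. Instead I would, for each fixed sign pattern, use the Fincke--Pohst algorithm on the same lattice to list every vector with $|\Lambda|<2^{(1-B)/2}$. Alternatively, for each choice of $y$, one can test whether $x\in(y,\,y+\sqrt y)$ is $S$-smooth via a sorted list of $S$-units. Keeping the coprime pairs with $x-y<y^{1/2}$ gives the $605$ solutions, and recording $\nu_p(xy)$ for each solution yields the stated inequalities and the eight exceptional pairs.
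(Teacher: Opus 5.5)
Your outline is correct and takes essentially the same approach as the source: the paper gives no proof of its own but quotes Theorem 4.6 of de Weger, which is obtained in exactly this way, from a Baker-type lower bound for the real form $\Lambda=\sum_p c_p\log p$ with $0<\Lambda<y^{-1/2}$, LLL reduction of the resulting huge bound, and a final enumeration from which the $605$ pairs and their valuations are read off. Your numerical guesses are off, but this only enlarges the final Fincke--Pohst search and does not affect the validity of the method: Matveev's constant for six logarithms is of order $10^{21}$, and with only the decay $\Lambda<2^{(1-B)/2}$ the iterated reduction stalls well above $B=30$.
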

It is a short calculation to list the $605$ pairs $(x,y)$; the largest corresponds to the identity
\begin{equation} \label{EX}
3^{10} 7^5 13^4 - 2^{10} 5^6 11^6 = 4104623.
\end{equation}
If there is a perfect $2$-code on $q$ symbols with $P(q) \leq 13$, it follows from (\ref{size}) and $q \geq 6$ that there exists a positive integer $d$ such that $r_2=dx$ and $r_1=dy$, for one of these $605$ pairs $(x,y)$. From (\ref{size}), 
$$
d (x-y) < \left( \frac{6dy}{q} \right)^{1/2}
$$
and so
\begin{equation} \label{plip}
q d^{1/2} < 6^{1/2} \left( \frac{y^{1/2}}{x-y} \right).
\end{equation}
 Since
 \begin{equation} \label{plop}
  \prod_{p \mid xy, p > 2} p \mid q,
 \end{equation}
 this provides a very strong upper bound upon $d$. Further, from equation (\ref{good}), if $p > 3$ is a prime dividing $q$, then
 $$
 r_1+r_2 \equiv 3 \mod{p},
 $$
 whereby $P(d) = P(\gcd(r_1,r_2)) \leq 3$. By way of example, for 
 $$
 (x,y) = \left( 3^{10} 7^5 13^4, 2^{10} 5^6 11^6 \right)
 $$
 as in (\ref{EX}), necessarily $q \geq 3 \cdot 5 \cdot 7 \cdot 11 \cdot 13$ and so, from (\ref{plip}), 
 $$
 d < \frac{6 \cdot 2^{10} 5^6 11^6}{\left( 3 \cdot 5 \cdot 7 \cdot 11 \cdot 13 \cdot 4104623 \right)^2} < 1,
 $$
 a contradiction. In fact, for  our $605$ pairs, inequalities (\ref{plip}) and (\ref{plop}) contradict $d \geq 1$, except for $(x,y)=(3,2), (243,242)$ and $(2401,2400)$, where we find that necessarily $d=1$,  
$(x,y)=(4,3), (81,80)$ and $(4375,4374)$, where we have $d \leq 2$, $(x,y)=(128,125)$ with $d \leq 3$, and
$(x,y)=(9,8)$, where  $d \leq 5$. 

 In each of these cases, for values of $d$ with $P(d) \leq 3$, we substitute $r_2=dx$ and $r_1=dy$ into (\ref{good}) and observe that
 $$
 2d (x+y) \equiv 6 \mod{q}.
 $$
 This contradicts {\ref{plop}) unless $(x,y,d)=(9,8,3)$ or $(x,y,d)=(128,125,1)$, in which case
 (\ref{good}) yields $q=12$ and $q=125/2$, respectively,  contradicting (\ref{roots1}). This completes the proof of Theorem \ref{thm-main1}.

\section{$q=2^\alpha p^\beta$ : the proof of Theorem \ref{thm-mainx}} \label{Sec3}

In this section, we will prove Theorem \ref{thm-mainx}. As previously noted, this turns out to be significantly more involved than the  proofs of Theorems  \ref{thm-main0} and   \ref{thm-main1}.
Let us suppose, for the remainder of this section, that $q=2^\alpha p^\beta$ with $p \geq 5$ prime (the case $p=3$ is treated in \cite{BZLF}),  and that $\alpha$ and $\beta$  are positive integers. Then from (\ref{roots1}) and (\ref{roots2}) necessarily $n \equiv 2 \mod{p}$ and $2p^2 \mid f(n,q)$.
 From (\ref{roots1}), 
 $$
 r_1+r_2 \equiv 3 \mod{p}
 $$
 and therefore we may write
 \begin{equation} \label{general}
r_i=2^\kappa \; \mbox{ and } \; r_{3-i} = 2^\omega p^\gamma, \; i \in \{ 1, 2 \},
\end{equation}
 where $\kappa, \omega$ and $\gamma$  are nonnegative integers. As noted earlier, we may assume that $q \geq 94$, so that, from (\ref{size}) and a short calculation, we can suppose that $\kappa \geq 5$, $\kappa > \omega$ and $\gamma \geq 1$.
 
We begin with some elementary remarks. Note first  that (\ref{good}) and (\ref{general}) combine to imply the equation
 \begin{equation} \label{spidey}
 2^{\alpha-1} p^\beta (2^\kappa-2^\omega p^\gamma)^2 = 2^\kappa+2^\omega p^\gamma +2^{\alpha-1} p^\beta-3,
 \end{equation}
 whereby
 \begin{equation} \label{spidey2}
 \min \{ \alpha-1,\omega \} = 0.
  \end{equation}
 Modulo $16$, we have one of
\begin{itemize}
\item $\omega=0$, $p \equiv 3 \mod{8}$ and $\gamma$ is odd,
\item $\omega=1$, $\alpha=1$, $p\not\equiv 1 \mod{8}$, and, if $\beta$ is even, then $4 \mid \beta$, $p \equiv 3 \mod{8}$ and $\gamma$ is odd,
\item $\omega=2$, $\alpha=1$, $p \equiv -1 \mod{8}$ and $\beta$ is odd,
\item $\omega \geq 3$, $\alpha=1$, $p \equiv 3 \mod{8}$ and $\beta$ is odd.
\end{itemize}

In the next few subsections, we will bound $\gamma$ and $\beta$, so that equation (\ref{spidey}) becomes a polynomial-exponential equation (polynomial in $p$), which we would typically approach via Schmidt's Subspace theorem. We will mostly avoid this in order to keep our proofs as effective as possible, though we will, at the very end of this section, appeal to Ridout's $p$-adic version of Roth's theorem, itself a special case of the Subspace theorem.

 \subsection{An upper bound for $\gamma$}
 
 We begin by deducing an upper bound upon the exponent $\gamma$ in (\ref{general}) : 
 \begin{prop} \label{prop-upper}
 If there exists a perfect $2$-code on $q$ symbols, with $q=2^\alpha p^\beta$, for $\alpha$ and $\beta$  positive integers, and $p$ prime, for which the (integer) roots $r_1<r_2$ of the corresponding polynomial (\ref{poly}) are as in (\ref{general}), then 
 $\gamma \leq 5039$.
 \end{prop}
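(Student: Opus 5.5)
The plan is to turn the closeness estimate of Proposition \ref{Goop} into a very small linear form in two logarithms, and then play it against a lower bound for such forms. The lower bound must be uniform in the size of $p$, since $p$ is not bounded at this stage. Write $r_i=2^\kappa$ and $r_{3-i}=2^\omega p^\gamma$ as in (\ref{general}), and put
$$
\Lambda = (\kappa-\omega)\log 2 - \gamma \log p .
$$
Since $|2^\kappa - 2^\omega p^\gamma| = r_2-r_1 < (6r_1/q)^{1/2}$, dividing by $2^\omega p^\gamma$ and using $r_1\ge r_{3-i}/2$ (the roots are close) gives
$$
|e^{\Lambda}-1| < \frac{(6r_1/q)^{1/2}}{2^\omega p^\gamma} \ll \frac{1}{q^{1/2}\,(2^\omega p^\gamma)^{1/2}} .
$$
Hence, with $q=2^\alpha p^\beta\ge 2p$, we get $\log|\Lambda| < -\tfrac{1}{2}(\gamma+1)\log p + c_0$ for a small absolute constant $c_0$. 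The crucial point is that the upper bound decays like $p^{-\gamma/2}$, i.e.\ linearly in $\gamma\log p$.

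Next I would apply Laurent's 2008 estimate for linear forms in two logarithms (Corollary 2, with $m=10$ or an optimized choice of parameters) to $\alpha_1=2$, $\alpha_2=p$, $b_1=\kappa-\omega$, $b_2=\gamma$. Take $\log A_1=\max\{\log 2, 1/D\}$-type heights with $D=1$, so $\log A_1\asymp 1$ and $\log A_2=\log p$. Set
$$
b'=\frac{\kappa-\omega}{\log p}+\frac{\gamma}{\log 2}.
$$
From $2^{\kappa-\omega}\approx p^\gamma$ we have $\kappa-\omega\approx \gamma\log p/\log 2$, so $b'\approx 2\gamma/\log 2$ and $\log b'\approx\log\gamma$. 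Laurent's bound then reads
$$
\log|\Lambda| \ge -C\,\log 2\,\log p\,\bigl(\max\{\log b'+0.38,\,10\}\bigr)^2
$$
with $C\approx 25.2$ (or a smaller constant from the refined table).

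Comparing the two bounds, the factor $\log p$ cancels, leaving an inequality of the shape
$$
\gamma < 2C\log 2\,\bigl(\max\{\log\gamma+1.1,\,10\}\bigr)^2+O(1).
$$
This bounds $\gamma$ absolutely. Crude constants give something like $\gamma\lesssim 10^4$, and the target $5039$ should come from using the best admissible parameter choice in Laurent's theorem, possibly splitting into the two regimes $\log b'\le 10$ and $\log b'>10$.

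The main obstacle is bookkeeping of constants so that the final numerical bound actually reaches $5039$ rather than a somewhat larger number. If a single application does not suffice, I would first sharpen the upper bound for $|\Lambda|$ using $q\ge 2p^\beta$ with $\beta\ge1$. Failing that, I would run Laurent's more flexible Theorem 2 with optimized $\rho,\mu$ in the two cases $p$ small (say $p<10^{10}$) and $p$ large, since for large $p$ the height term $\log A_2$ dominates and allows better parameters.
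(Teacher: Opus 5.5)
Your proposal is essentially the paper's argument: the same linear form $\Lambda=(\kappa-\omega)\log 2-\gamma\log p$, the same upper bound of order $p^{-\gamma/2}$ from Proposition~\ref{Goop}, and Laurent's Corollary~2 with $\log A_2=\log p$, so that $\log p$ cancels.

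One correction to your constants. With $D=1$, Laurent requires $\log A_1\ge 1$, so the factor in the lower bound is $1$, not $\log 2$, and $b'=(\kappa-\omega)/\log p+\gamma<2.5\gamma$. With that choice the direct comparison gives $\gamma<50.4\cdot 10^2=5040$ whenever $\gamma\le 6024$, since there the maximum equals $10$. For $\gamma\ge 6025$ the inequality $\gamma<50.4(\log(2.5\gamma)+0.38)^2$ fails, so that case cannot occur. Hence $5039$ falls out immediately, and none of your proposed refinements (optimized parameters, splitting by the size of $p$) are needed.
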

 
 We can sharpen this somewhat through arguing more carefully, but it is adequate for our purposes.
 To obtain this result, we first observe that 
 inequality (\ref{size})  implies  either
 $$
0 <  2^{\kappa-\omega} - p^\gamma  < 2^{(1-\alpha-\omega)/2} \cdot 3^{1/2} \cdot p^{(\gamma-\beta)/2}
 $$
 or
 $$
 0< p^\gamma - 2^{\kappa-\omega}  < 
 2^{(\kappa+1-\alpha-2\omega)/2} \cdot 3^{1/2} \cdot p^{-\beta/2}.
 $$
Writing
$$
\Lambda = \left| (\kappa-\omega) \log 2 - \gamma \log p \right|,
$$
 it follows that we have
 \begin{equation} \label{up1}
 \Lambda = (\kappa-\omega) \log 2 - \gamma \log p < 2^{(1-\alpha-\omega)/2} \cdot 3^{1/2} \cdot p^{(-\gamma-\beta)/2}
 \end{equation}
 or
  \begin{equation} \label{up2}
 \Lambda =  \gamma \log p -  (\kappa-\omega) \log 2< 2^{(1-\kappa-\alpha)/2} \cdot 3^{1/2} \cdot p^{-\beta/2},
 \end{equation}
 respectively. We thus have, from (\ref{size}) and $q \geq 94$, that
 \begin{equation} \label{up3}
 \Lambda < 1.1 \cdot \left( \frac{6}{2^\omega q} \right)^{1/2} p^{-\gamma/2},
  \end{equation}
 in either case.
 
To deduce a lower bound upon $\Lambda$, we use a special case of Corollary 2 of Laurent \cite{Laurent2008}:

\begin{cor}[Laurent]\label{cor:Laurent}
Let $\alpha_1$ and $\alpha_2$ be multiplicatively independent real algebraic numbers $>1$ and $b_1,b_2 \in \Z$ not both zero.
Set $D = [\Q(\alpha_1, \alpha_2): \Q]$.
Then f we have
\[
	\log | b_2 \log \alpha_2 - b_1 \log \alpha_1|
	\geq -25.2 \cdot D^4 \left( \max \{ \log b^\prime + 0.38, 10/D, 1 \} \right)^2 \log A_1 \log A_2,
\]
where $\log A_i \geq \max \{ h(\alpha_i), |\log \alpha_i|/D, 1/D \}$ and
\[
	b^\prime = \frac{|b_1|}{D \log A_2} + \frac{|b_2|}{D \log A_1}.
\]
\end{cor}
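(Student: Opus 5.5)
The statement is not new: it is the special case of Corollary 2 of Laurent \cite{Laurent2008} in which $\alpha_1,\alpha_2$ are real and greater than $1$. I would therefore prove it by recalling Laurent's general statement and checking that each of its quantities reduces to the one written above. I am reconstructing Laurent's general statement from memory, so that is the step I would check against \cite{Laurent2008} first.

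Laurent's Corollary 2 concerns a linear form
$$\Lambda = b_2 \log \alpha_2 - b_1 \log \alpha_1,$$
where $\alpha_1,\alpha_2$ are multiplicatively independent algebraic numbers, $\log \alpha_i$ are any fixed determinations of the logarithms, and $b_1,b_2$ are integers, not both zero. Its conclusion is the displayed lower bound for $\log|\Lambda|$ with the constant $25.2$. In the general statement, $D$ is the normalised degree $[\Q(\alpha_1,\alpha_2):\Q]/[\mathbb{R}(\alpha_1,\alpha_2):\mathbb{R}]$. The heights must satisfy
$$\log A_i \geq \max\{h(\alpha_i), |\log \alpha_i|/D, 1/D\},$$
and $b'$ is defined exactly as above.

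First, since $\alpha_i$ are real and exceed $1$, we take the real logarithms $\log \alpha_i>0$. These are admissible determinations, and $|\log \alpha_i| = \log \alpha_i$. Second, $\alpha_1,\alpha_2\in\mathbb{R}$ gives $\mathbb{R}(\alpha_1,\alpha_2)=\mathbb{R}$, so $[\mathbb{R}(\alpha_1,\alpha_2):\mathbb{R}]=1$. Hence Laurent's normalised degree is simply $D=[\Q(\alpha_1,\alpha_2):\Q]$, as in the statement. Third, the independence hypothesis and the condition on $A_i$ are assumed verbatim. Since $b_1,b_2$ are not both zero and $\log\alpha_1,\log\alpha_2$ are $\Q$-linearly independent, $\Lambda\neq 0$, so $\log|\Lambda|$ makes sense. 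With these identifications, Laurent's inequality is exactly the displayed one.

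The only real obstacle is bookkeeping. One must make sure that the version being quoted uses $D^4$ rather than a bound stated with $D^2$ on different height normalisations, and uses $\max\{\log b'+0.38, 10/D, 1\}$ with constant $25.2$ (Laurent's "$m=10$" choice). Those are the parameters of the relevant corollary. Nothing needs to be recomputed; one only specialises to the real case, where the normalising factor equals one.

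For the intended application one takes $\alpha_1=2$, $\alpha_2=p$, $D=1$, $\log A_1=\log 2$ (note $1/D=1$ would exceed $\log 2$, so in fact $\log A_1 = 1$) and $\log A_2=\log p$. Multiplicative independence of $2$ and an odd prime $p$ is immediate from unique factorisation.
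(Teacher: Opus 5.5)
Your proposal is correct and matches the paper, which offers no argument beyond citing the statement as the real case of Laurent's Corollary 2. Your observation that the normalised degree $[\Q(\alpha_1,\alpha_2):\Q]/[\mathbb{R}(\alpha_1,\alpha_2):\mathbb{R}]$ reduces to $D$ when $\alpha_1,\alpha_2$ are real is exactly that specialisation.

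One piece of bookkeeping is missing. As far as I recall, Laurent states his result for positive $b_1,b_2$, not for integers ``not both zero'' as in your reconstruction. If both are negative, replace $\Lambda$ by $-\Lambda$ to reduce to both positive. If $b_1b_2\le 0$, then $|\Lambda|\ge\min\{\log\alpha_1,\log\alpha_2\}$, and Liouville's inequality $\log(\alpha_i-1)\ge -D\,(h(\alpha_i)+\log 2)$ gives the bound with enormous room to spare. In the paper's application $b_1=\kappa-\omega$ and $b_2=\gamma$ are positive anyway, so this does not affect its use.
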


We apply this with
$$
\alpha_1=2, \; \alpha_2=p, \; b_1=\kappa-\omega, \; b_2=\gamma, \; D=1,
$$
so that we may choose 
$$
A_1=e, \; A_2=p, 
$$
whence, from (\ref{size}), 
$$
b^\prime = \frac{\kappa}{\log p} + \gamma< 2.5 \gamma
$$
and so
\begin{equation} \label{LFL-lower}
\log \Lambda \geq -25.2 \, \left( \max \{ \log(2.5 \gamma) + 0.38, 10 \} \right)^2 \log p.
\end{equation}	 
Combining this with (\ref{up3}), we thus have
\begin{equation} \label{fritz}
\gamma < 50.4 \, \left( \max \{ \log(2.5 \gamma) + 0.38, 10 \} \right)^2 - \frac{\log (2^\omega q/6)}{\log p}.
\end{equation}
If $\gamma \leq 6024$, $\max \{ \log(2.5 \,\gamma) + 0.38, 10 \}  = 10$ and so $\gamma \leq 5039$. If $\gamma \geq 6025$, then 
$$
\max \{ \log(2.5 \,\gamma) + 0.38, 10 \}  = \log(2.5 \,\gamma) + 0.38, 
$$
and we have
 $$
\gamma < 50.4 \, \left( \log(2.5 \,\gamma) + 0.38 \} \right)^2,
$$
contradicting $\gamma \geq 6025$. In all cases, we thus may conclude that $\gamma \leq 5039$.

 \subsection{A lower bound for $\gamma$}
 In the other direction, we have
 \begin{prop} \label{prop-lower}
 If there exists a perfect $2$-code on $q$ symbols, with $q=2^\alpha p^\beta$, for $\alpha$ and $\beta$  positive integers, and $p$ prime, for which the (integer) roots $r_1<r_2$ of the corresponding polynomial (\ref{poly}) are as in (\ref{general}), then 
 $\gamma \geq \max \{ 2 \beta+1, 4 \}$. In particular, from Proposition \ref{prop-upper}, necessarily $\beta \leq 2519$.
 \end{prop}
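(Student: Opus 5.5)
The plan is to work directly with equation (\ref{spidey}), using the closeness bound (\ref{size}) together with $p$-adic information. Put
$$
D = 2^\kappa - 2^\omega p^\gamma ,
$$
so that $D = \pm (r_2-r_1)$ and hence $0<|D|<(6r_1/q)^{1/2}$. Substituting $2^\kappa = D + 2^\omega p^\gamma$ into (\ref{spidey}) and cancelling, we obtain the identity
\begin{equation*}
2^{\alpha-1} p^\beta (D^2-1) = D - 3 + 2^{\omega+1} p^\gamma .
\end{equation*}
This identity, combined with the size bound, will drive the whole argument.

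First I dispose of tiny $|D|$. The case $D=0$ is impossible, since $\kappa>\omega$ and $p$ is odd. If $D=\pm 1$, the left side vanishes and we need $2^{\omega+1}p^\gamma \in \{2,4\}$, impossible since $\gamma\geq 1$ and $p \geq 5$. The case $D=3$ forces $2^{\alpha+2}p^\beta = 2^{\omega+1}p^\gamma$, so $\gamma=\beta$ and $\omega=\alpha+1$. By (\ref{spidey2}) this gives $\alpha=1$, $\omega=2$, and then $2^\kappa = 4p^\beta+3$, which is impossible modulo $4$ since $\kappa\geq 5$. Hence $D\neq 3$.

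Now reduce the identity modulo $p^{\min\{\beta,\gamma\}}$ to get $D\equiv 3 \pmod{p^{\min\{\beta,\gamma\}}}$, and therefore $|D-3|\geq p^{\min\{\beta,\gamma\}}$. On the other hand, from (\ref{size}) and $r_1\le \max\{2^\kappa, 2^\omega p^\gamma\}\approx 2^\omega p^\gamma$, we have
$$
D^2 < \frac{6 r_1}{q} \lesssim 6\cdot 2^{\omega-\alpha} p^{\gamma-\beta}.
$$
Suppose first that $\gamma\le\beta$. Then $p^{2\gamma}\lesssim D^2 \lesssim 6\cdot 2^{\omega-\alpha}p^{\gamma-\beta}\le 6\cdot 2^{\omega-\alpha}$, which is absurd once the power of $2$ is controlled. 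So $\gamma>\beta$, and then $p^{2\beta}\lesssim 6\cdot 2^{\omega-\alpha}p^{\gamma-\beta}$, i.e.\ $p^{3\beta-\gamma}\lesssim 6\cdot 2^{\omega-\alpha}$. This gives $\gamma\ge 2\beta+1$ (in fact roughly $\gamma\geq 3\beta$) provided $2^{\omega}$ is not too large relative to $p^\beta$.

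The main obstacle is exactly the factor $2^{\omega-\alpha}$ when $\alpha=1$ and $\omega\geq 3$, where a priori $\omega$ could be large. I would handle this by refining the congruence $2$-adically. In that case $D$ is divisible by $2^\omega$ (since $\kappa>\omega$), so $D-3$ is odd and $D\equiv 3$ holds modulo $p^{\min\{\beta,\gamma\}}$ while $D\equiv 0 \pmod{2^\omega}$. This forces $|D|\geq 2^{\omega}$ as well as $|D-3|\ge p^{\min\{\beta,\gamma\}}$, and combining the two lower bounds multiplicatively with the upper bound on $D^2$ should absorb the $2^\omega$. If this is not quite enough, I would fall back on comparing the sizes of the two sides of the identity directly. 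The left side is about $2^{\alpha-1}p^\beta D^2$, while the right side is about $2^{\omega+1}p^\gamma$, and this relation pins $D^2$ to roughly $2^{\omega+2-\alpha}p^{\gamma-\beta}$ up to bounded factors.

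Finally, $\gamma\geq 4$ follows from $\gamma\ge 2\beta+1\ge 3$ once $\gamma=3$ is ruled out. That forces $\beta=1$, and I would exclude it using the congruence conditions listed after (\ref{spidey2}) together with the now very rigid identity (for instance by reducing modulo $8$ or $16$ with $\gamma=3$, $\beta=1$). The bound $\beta\leq 2519$ then follows at once from $2\beta+1\le\gamma\le 5039$ and Proposition \ref{prop-upper}.
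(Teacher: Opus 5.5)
Your identity $2^{\alpha-1}p^\beta(D^2-1)=D-3+2^{\omega+1}p^\gamma$ is correct, and so are the exclusion of $|D|\le 1$ and $D=3$ and the observation $|D|\ge 2^\omega$. Together they do give $\gamma>\beta$: the only survivor is $D=-2$, $p=5$, $\kappa=3$, which is ruled out by $\kappa\ge 5$. The gap is that these tools stop at $\gamma\ge 2\beta$. Combining $|D|\ge 2^\omega$ and $|D|\ge p^\beta-3$ with $D^2<6\cdot 2^{\omega-\alpha}p^{\gamma-\beta}$ gives only $p^\beta-3<3p^{\gamma-\beta}$, which says nothing at $\gamma=2\beta$. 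In that case $\gamma$ is even, which forces $\omega\ge 1$ and hence $\alpha=1$. If moreover $2^\omega$ is of size $p^\beta$, all the first-order information is consistent: $D\equiv 3 \mod{p^\beta}$, $2^\omega$ exactly divides $D$, $|D|<3p^\beta$, and the identity itself only says $D^2=2^{\omega+1}p^\beta+O(1)$. So neither your multiplicative combination nor your fallback of comparing sizes of the two sides gives a contradiction.

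The missing idea is a second-order $p$-adic step, which is exactly what the paper does. Write $D=3+kp^\beta$ and divide the identity by $p^\beta$ to get $8+6kp^\beta+k^2p^{2\beta}=k+2^{\omega+1}p^\beta$. Hence $k\equiv 8 \mod{p^\beta}$, i.e. $D\equiv 3+8p^\beta \mod{p^{2\beta}}$. Then either $D=3+8p^\beta$, which is too large, or $|D|\ge p^{2\beta}-8p^\beta-3$. Against $|D|<3p^\beta$ the latter leaves only $p^\beta\in\{5,7,11\}$ to check. Your bound $|D|<3p^{\gamma-\beta}$, which comes from $|D|\ge 2^\omega$, actually lets this run for every $\omega\ge 1$; the paper phrases it for $\omega=1$ with $|D|<(6p^\beta)^{1/2}$.

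Your treatment of $\gamma=3$ also cannot work as stated, because no congruence modulo a power of $2$ excludes $\gamma=3$, $\beta=1$. For instance, take $\alpha=1$, $\omega=0$. Reducing (\ref{spidey}) modulo $2^\kappa$ gives $f(p)\equiv 0$ with $f(x)=x^7-x^3-x+3$. Since $f(1)$ is even and $f'(x)$ is odd for odd $x$, Hensel's lemma gives a $2$-adic root. That root is $\equiv 3 \mod{16}$, as $16 \mid f(3)=2160$, which is consistent with the list following (\ref{spidey2}). Via Proposition \ref{Goop} this case is really the approximation problem $|2^{\kappa-\omega}-p^3|<3^{1/2}p$. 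The paper settles it with external Diophantine input: the Bauer--Bennett bound $|2^k-p^3|\ge p^{4/3}$, apart from $p=5$, $k=7$. To avoid that input you would have to work $p$-adically rather than $2$-adically. That means pushing the expansion to $D\equiv 3+2^{\alpha+2}p+3\cdot 2^{2\alpha+2}p^2 \mod{p^3}$ and playing it against the size bound and the exact identity, which is a different argument from the one you sketch.
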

 We begin by noting that, from Proposition \ref{Goop}, we have
$$
2^\kappa <2^\omega p^\gamma + \left( \frac{3 \cdot 2^{\omega-\alpha+1} p^\gamma}{p^\beta} \right)^{1/2},
$$
whence (\ref{spidey}) implies
$$
2^{\omega+1}  p^\gamma + \left( \frac{3 \cdot 2^{\omega-\alpha+1} p^\gamma}{p^\beta} \right)^{1/2} +2^{\alpha-1} p^\beta > 2^{2 \omega+\alpha-1} p^\beta (2^{\kappa-\omega}- p^\gamma)^2,
$$
so that 
\begin{equation} \label{kook}
2^{2-\omega-\alpha} p^{\gamma-\beta}+ 3^{1/2} 2^{-3\omega/2-3\alpha/2+3/2} p^{(\gamma-3\beta)/2}
+2^{-2\omega} > (2^{\kappa-\omega}- p^\gamma)^2.
\end{equation}

 Since the right-hand-side here is a positive integer (and at least $9$ if $\gamma \geq 2$), if $\gamma \leq \beta$, we therefore have a contradiction, unless, from (\ref{spidey2}), we have 
$\gamma=\beta =1$ and either $\omega=0$ or $\omega=\alpha=1$. In these cases,
since $2^\kappa \equiv 3 \mod{p}$ implies that we may write 
$2^\kappa = 3 +m  p$, for some positive integer $m$, we have either
\begin{equation} \label{bean1}
2^{\alpha-1} (3+(m-1)p)^2 = m+1 +2^{\alpha-1},
\end{equation}
or
\begin{equation} \label{bean2}
(3+(m-2) p)^2 = m+3,
\end{equation}
respectively.
In case (\ref{bean1}), if $m=1$, we obtain a contradiction modulo $8$, while $m \geq 2$ and (\ref{size}) imply that
$$
\left( \frac{3 }{2^{\alpha-1}} \right)^{1/2} > |2^\kappa-p| = (m-1)p+3 \geq p+3,
$$
an immediate contradiction. In case (\ref{bean2}), if $m=1$, we derive a contradiction since $p >3$, unless $p=5$. In this last situation, (\ref{good}) yields
$$
5 (2^{\kappa-1}-5)^2 = 2^{\kappa-2}+3,
$$
whence we find that $\kappa=3$. This corresponds to $r_1=8, r_2=10$, contradicting (\ref{roots1}). If we have (\ref{bean2}) and $m \geq 2$, then $m=2$ implies a contradiction modulo $8$, while $m \geq 3$ and (\ref{size}) yield 
$$
6^{1/2} > |2^\kappa-2p| = (m-2)p+3 \geq p+3,
$$
once more a contradiction.

We therefore have $\gamma > \beta$, so that $2^\kappa \equiv 3 \mod{p^\beta}$. Then we can write
$$
2^\kappa - 2^\omega p^\gamma = 3+m p^\beta, 
$$
for $m$ an integer. If $m=0$, then $\omega=0$ and so
$$
2^{\alpha+2} p^\beta  = 2^{\omega+1} p^\gamma,
$$
contradicting $\beta < \gamma$. It follows that 
$$
\left| 2^\kappa - 2^\omega p^\gamma \right| \geq p^\beta-3,
$$
while (\ref{size}) implies that
$$
\left| 2^\kappa - 2^\omega p^\gamma \right| < \left(
3 \cdot 2^{\omega-\alpha+1} p^{\gamma-\beta}
 \right)^{1/2},
$$
whence
$$
3 \cdot 2^{\omega-\alpha+1} p^{\gamma-\beta} > p^{2\beta}-6 \cdot p^\beta+9.
$$
If $\omega=0$, we thus have
$$
3 \cdot p^{\gamma-\beta} > p^{2\beta}-6 \cdot p^\beta+9,
$$
so that, if $\gamma \leq 2\beta-1$,
$$
3 \cdot p^{-\beta-1} > 1-6 \cdot p^{-\beta},
$$
contradicting $p \geq 5$.
If, on the other hand, $\omega \geq 1$, then $\alpha=1$ and we have $p^\beta \equiv 3 \mod{2^\omega}$, so that $2^\omega \leq p^\beta-3$, and so
$$
3 \cdot p^\gamma - 9 \cdot  p^{\gamma-\beta} > p^{2\beta}-6 \cdot p^\beta+9.
$$
If $\gamma \leq 2\beta-1$, it follows that
$$
3 \cdot p^{-1} - 9 \cdot  p^{-\beta-1} > 1-6 \cdot p^{-\beta}+9 \cdot p^{-2\beta},
$$
a contradiction unless $\beta=1$, which itself contradicts $\gamma > \beta$ and $\gamma \leq 2\beta-1$.

In all cases, we thus have $\gamma \geq 2 \beta$. To show that $\gamma > 2$, we could appeal to Corollary 1.7 of \cite{BaBe}, but we can argue more generally as follows. If we suppose that $\gamma=2\beta$, then, from (\ref{spidey}), necessarily $\omega=\alpha=1$, $p \equiv 5 \mod{8}$, $\beta$ is odd, and $2^\kappa \equiv 3 \mod{p^\beta}$, say $2^\kappa=3+k_0p^\beta$. Equation (\ref{spidey}) thus becomes
$$
(3+k_0p^\beta-2 p^{2\beta})^2 = k_0+2 p^{\beta} +1,
$$
so that 
$$
4p^{4\beta} -4k_0p^{3\beta} + (k_0^2-12) p^{2\beta}+(6k_0-2)p^\beta +(8-k_0)= 0.
$$
We therefore have $k_0 \equiv 8 \mod{p^\beta}$, say $k_0=8+k_1 p^\beta$, whereby 
$$
2^\kappa \equiv 3 + 8 p^\beta \mod{p^{2\beta}}.
$$
It follows that there exists an integer $m$ such that 
$$
2^\kappa - 2 p^{2\beta} = 3 + 8 p^\beta + m p^{2\beta}.
$$
From parity considerations, $m$ is odd. If $m \geq 1$, we have
$$
\left( 6 p^\beta \right)^{1/2} > \left| 2^\kappa - 2 p^{2\beta} \right| = 
3 + 8 p^\beta + m p^{2\beta} > p^{2\beta},
$$
contradicting $p \geq 5$.
We thus have $m \leq -1$, so that
$$
\left( 6 p^\beta \right)^{1/2} > \left| 2^\kappa - 2 p^{2\beta} \right| = 
\left| 3 + 8 p^\beta + m p^{2\beta} \right|  \geq p^{2\beta}-8 p^\beta-3,
$$
a contradiction unless $\beta=1$ and $p \in \{ 5, 7 \}$. In these cases, there exists no $\kappa$ for which
$$
\left( 6 p^\beta \right)^{1/2} > \left| 2^\kappa - 2 p^{2\beta} \right|.
$$

We may thus conclude that $\gamma \geq 2 \beta+1$. In particular, we have $\gamma \geq 3$.
 We now show that $\gamma \geq 4$. 
If $\gamma=3$, again $\beta=1$ and Proposition \ref{Goop} yields the inequality
$$
|2^{\kappa-\omega}-p^3| <  3^{1/2} p.
$$
Corollary 2.2 of Bauer and Bennett \cite{BaBe2} gives that either
$$
|2^{\kappa-\omega}-p^3| \geq p^{4/3},
$$
or $p =5$ and $\kappa-\omega=7$. Since $p \geq 5$, it follows in all cases that $p =5$ and $\kappa-\omega=7$. We thus have $\omega=1$, so that $\kappa=8$, contradicting $2^\kappa \equiv 3 \mod{5}$. This completes the proof of Proposition \ref{prop-lower}.

 \subsection{A lower bound for $p$}
 \begin{prop} \label{prop-lowerp}
 If there exists a perfect $2$-code on $q$ symbols, with $q=2^\alpha p^\beta$, for $\alpha$ and $\beta$  positive integers, and $p$ prime, then $p > 10^{10}$.
 \end{prop}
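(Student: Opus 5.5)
The plan is to turn the closeness of the $S$-units $2^\kappa$ and $2^\omega p^\gamma$ into a computational problem in the exponents alone, and then run it for every prime $5 \le p \le 10^{10}$. By Propositions \ref{prop-upper} and \ref{prop-lower}, we may assume $4 \le \gamma \le 5039$, $\gamma \ge 2\beta+1$ and $\beta \le 2519$. Inequality (\ref{up3}) then says that $(\kappa-\omega)/\gamma$ is an extraordinarily good rational approximation to $\log p/\log 2$. Indeed,
$$
\left| \frac{\log p}{\log 2} - \frac{\kappa-\omega}{\gamma} \right| < \frac{1.1\cdot 6^{1/2}}{\gamma \log 2}\, p^{-\gamma/2} q^{-1/2},
$$
and with $\gamma \ge 4$ and $q \geq 94$ the right-hand side is far smaller than $1/(2\gamma^2)$. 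By Legendre's theorem, $(\kappa-\omega)/\gamma$ must therefore be a convergent $p_j/q_j$ of the continued fraction of $\log p/\log 2$, with $\gamma$ a multiple of $q_j$ bounded by $5039$. In fact it must be a convergent in lowest terms: if $\gcd(\gamma,\kappa-\omega)=g>1$, then $2^{(\kappa-\omega)/g}$ and $p^{\gamma/g}$ are also close, and the estimate only gets sharper.

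For each prime $p$ in the range I would compute the convergents of $\log p/\log 2$ with denominator at most $5039$. For each such convergent I would then check the much stronger condition $|2^{\kappa-\omega}-p^\gamma| < 3^{1/2}\,2^{(\kappa+1-\alpha-2\omega)/2} p^{-\beta/2}$, or the analogous bound from Proposition \ref{Goop}, in the form
$$
\| \gamma \log p/\log 2 \| \ll p^{-\gamma/2}.
$$
Since $\gamma \geq 4$, this requires $|q_j \log p/\log 2 - p_j|$ to be less than roughly $p^{-2}$. Heuristically, this happens for only finitely many, essentially no, primes. Computationally it amounts to checking that no partial quotient $a_{j+1}$ exceeds about $p^{\gamma/2}$, which is absurdly large.

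The main obstacle is the cost of doing this for about $4.5\times 10^{8}$ primes. To make it cheap, I would first observe that, since $\gamma \ge 4$, we need
$$
|\,\kappa' \log 2 - \gamma\log p\,| < p^{-2}
$$
with $\kappa' = \kappa-\omega$ and $\gamma \le 5039$, so a floating-point computation with modest precision suffices as a sieve. For each $p$ and each $\gamma \le 5039$ (or just each convergent denominator), one computes the distance from $\gamma\log_2 p$ to the nearest integer. Any surviving candidates would then be checked exactly against the congruence conditions: $2^\kappa\equiv 3 \mod{p^\beta}$, the residue conditions modulo $16$ listed before (for instance $p \equiv 3 \mod 8$ when $\omega=0$), and equation (\ref{spidey}) itself.

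If brute force over all $\gamma$ is too slow, I would instead use only the convergents with $q_j \le 5039$, which number $O(\log 5039)$ per prime. I would also use the congruence conditions to discard most primes up front; for example, Reuvers already handles $p\equiv 1 \mod 8$. I expect no survivors, which would give $p > 10^{10}$.
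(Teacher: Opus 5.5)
Your plan is correct and is essentially the paper's argument: Legendre's theorem forces $(\kappa-\omega)/\gamma$ to be a convergent of $\log p/\log 2$ with denominator at most $5039$, and a computer check over all primes $p<10^{10}$ shows that no partial quotient is as large as the approximation (\ref{up3}) demands. The only real difference is that you build $\gamma\ge 4$ and $q_j\mid\gamma$ directly into the test, using a threshold of order $p^{-\gamma/2}$ at the smallest admissible $\gamma$. The paper's criterion $a_{i+1}>2p^{q_i/2}q_i^{-1}-2$ uses only $\gamma\ge q_i$, so your version would automatically dispose of $q_j=1$ and of the two survivors $p=5$ and $p=181$, which the paper has to eliminate by hand.
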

 
From (\ref{up3}) and $q \geq 94$ (see \cite{CG}), we have
\begin{equation} \label{snoopy!!}
\left|  \frac{\log p}{\log 2}- \frac{\kappa-\omega}{\gamma} \right| < \frac{1.1 \left( \frac{6}{2^\omega q} \right)^{1/2} }{\gamma \, p^{\gamma/2} \log 2}< \frac{1}{2 \gamma p^\gamma}.
\end{equation}
Since $p^{\gamma/2} \geq \gamma$, a classical result of Legendre implies that $(\kappa-\omega)/\gamma= p_i/q_i$, where $p_i/q_i$ is a convergent in the infinite simple continued fraction expansion to $\frac{\log p}{\log 2}$ (with corresponding partial quotients $a_i$). 
We have
\begin{equation} \label{snoopy2}
\left|  \frac{\log p}{\log 2}- \frac{p_i}{q_i} \right| > \frac{1}{(a_{i+1}+2) q_i^2}
\end{equation}
and so, we require
\begin{equation} \label{snoopy3}
a_{i+1} > 2 \, p^{q_i/2}  q_i^{-1}-2,
\end{equation}
where, additionally, $q_i \leq \gamma \leq 5039$. Note that since the Fibonacci number $F_{20}=6765>5039$, we necessarily have $i \leq 19$. Notice further that $q_i=1$ implies that $\kappa-\omega = p_i \gamma$ and so 
$$
\left| 2^{\kappa-\omega} - p^\gamma \right| > \gamma \left( \min \{ p, 2^{p_i} \} \right)^{\gamma-1}. 
$$
Proposition \ref{Goop} thus implies that either 
$$
3^{1/2} 2^{(1-\alpha-\omega)/2} p^{(\gamma-\beta)/2} > \gamma p^{\gamma-1}
$$
or that
$$
3^{1/2} 2^{(1+\kappa-\alpha-2\omega)/2} p^{-\beta/2} >  \gamma 2^{\kappa-\omega-(\kappa-\omega)/\gamma}.
$$
In each case, we contradict $\gamma \geq 4$ and $p \geq 5$.
 We may thus conclude that $q_i > 1$ so that, in particular, $1 \leq i \leq 19$.

We will use this argument to deduce that there are no  perfect $2$-code on $q$ symbols, with $q=2^\alpha p^\beta$, for $\alpha$ and $\beta$  positive integers, and $p \leq 10^{10}$ prime. For each such small prime $p$ (there are $455052509$ primes $5 \leq p < 10^{10}$; we may assume that $p \not\equiv 1 \mod{8}$),
we compute the first, say, $20$ terms in the infinite simple continued fraction expansion to $\frac{\log p}{\log 2}$ and check to see whether any of the convergents $p_i/q_i$ and partial quotients $a_i$ satisfy (\ref{snoopy3}), with $2 \leq q_i \leq 5039$.
We find only the examples corresponding to
$$
p=5, \; p_1/q_1=7/3, \; a_2=9
$$
and 
$$
p=181, \; p_1/q_1=15/2, \; a_2=1621.
$$
Since these are both $5 \mod{8}$, we thus have $\alpha=\omega=1$ in each case and, from (\ref{spidey}), that $\beta$ is odd, so that the first inequality in 
(\ref{snoopy!!}) becomes
$$
\left|  \frac{\log p}{\log 2}- \frac{\kappa-\omega}{\gamma} \right| < \frac{1.1 \left( \frac{3}{2 p^\beta} \right)^{1/2} }{\gamma \, p^{\gamma/2} \log 2},
$$
where we have
$$
(p,\gamma, (\kappa-1)/\gamma) = (5,3d_1,7/3) \; \mbox{ or } \; (181,2d_2,15/2), 
$$
for positive integers $d_1$ and $d_2$. A short computation reveals that, in case $p=5$, we have $\beta =1$ and $d_1=1$, while $p=181$ yields an immediate contradiction. Since we have assumed that $q \geq 94$, this completes the proof of Proposition \ref{prop-lowerp}. We note that this computation, while time consuming, is readily parallelized (taking somewhat less than $10$ hours in Maple, using $16$ cores on a 2024 MacBook Pro).

 \subsection{An expansion in powers of $p^\beta$}
  
Crucial to our arguments is the following.
\begin{prop} \label{ghost}
Let $t = \left[ \gamma/\beta \right]$ (so that, from Proposition \ref{prop-lower}, $t \geq 2$).
We have
$$
2^\kappa \equiv  3 + \sum_{i=1}^{t-1} \mathfrak{C}_{i+1} 2^{2+\alpha i} p^{\beta i} \mod{p^{t \beta}},
$$
where $\mathfrak{C}_n$ denotes the Schr\"oder-Hipparchus {\it aka} super-Catalan number defined via
$$
\mathfrak{C}_1=\mathfrak{C}_2=1
$$
and, for $n \geq 3$,
$$
\mathfrak{C}_n = \frac{3(2n-3) \mathfrak{C}_{n-1} -(n-3) \mathfrak{C}_{n-2}}{n}.
$$
\end{prop}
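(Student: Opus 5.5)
The plan is to reduce (\ref{spidey}) modulo $p^{t\beta}$ to a quadratic congruence in $x=2^\kappa$ alone. I will then solve that quadratic as a $p$-adic power series in $Q=2^{\alpha-1}p^\beta$ (so $q=2Q$) and identify the coefficients with the Schr\"oder--Hipparchus numbers through their generating function.

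\emph{Reduction.} Write $x=2^\kappa$ and $y=2^\omega p^\gamma$, so that (\ref{spidey}) reads
$$
Q(x-y)^2=x+y+Q-3.
$$
Since $t=[\gamma/\beta]$, we have $p^{t\beta}\mid p^\gamma$, so $y\equiv 0$ modulo $p^{t\beta}$. The same holds for the cross terms $-2Qxy$ and $Qy^2$. Hence
$$
x\equiv 3+Q(x^2-1) \mod{p^{t\beta}}.
$$
Since $p^\beta\mid Q$, this congruence already gives $x\equiv 3 \mod{p^\beta}$.

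\emph{Uniqueness.} Set $g(z)=3+Q(z^2-1)$. Suppose $x$ and $x'$ are $p$-adic integers (or integers) with $x\equiv g(x)$ and $x'\equiv g(x')$ modulo $p^{s\beta}$, and with $x\equiv x'$ modulo $p^{j\beta}$ for some $j<s$. Then
$$
x-x'\equiv Q(x+x')(x-x') \mod{p^{s\beta}},
$$
so $x\equiv x'$ modulo $p^{(j+1)\beta}$. Starting from $j=1$ and inducting up to $s=t$, any two solutions of the congruence agree modulo $p^{t\beta}$.

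It therefore suffices to exhibit one solution: a formal power series $X=3+\sum_{i\ge1}c_iQ^i$ with integer coefficients satisfying $X=g(X)$ identically. Such a series converges $p$-adically, because $v_p(Q^i)\ge i\beta$. Truncating after $i=t-1$ changes it only by a multiple of $p^{t\beta}$. So $2^\kappa\equiv 3+\sum_{i=1}^{t-1}c_iQ^i \mod{p^{t\beta}}$.

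\emph{Identifying the coefficients.} Substitute $X=3+4u$ into $X=3+Q(X^2-1)$. This gives $4u=Q(8+24u+16u^2)$, that is,
$$
u=w(1+3u+2u^2), \qquad w=2Q=2^\alpha p^\beta.
$$
Now put $S=w(1+u)$. A direct expansion gives
$$
2S^2-(1+w)S+w=w\bigl[w+3wu+2wu^2-u\bigr]=0.
$$
The equation $2S^2-(1+w)S+w=0$, taken with the root $S=w+O(w^2)$, is the classical algebraic equation for the super-Catalan generating function $\sum_{n\ge1}\mathfrak{C}_nw^n$. Therefore $u=\sum_{i\ge1}\mathfrak{C}_{i+1}w^i$. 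It follows that
$$
X=3+\sum_{i\ge1}\mathfrak{C}_{i+1}\,2^{2}w^i=3+\sum_{i\ge1}\mathfrak{C}_{i+1}2^{2+\alpha i}p^{\beta i},
$$
which is exactly the stated expansion.

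The main obstacle is checking that the series defined by this quadratic has coefficients obeying the stated three-term recurrence with $\mathfrak{C}_1=\mathfrak{C}_2=1$, since the paper defines $\mathfrak{C}_n$ by the recurrence rather than by the generating function. My first approach is to cite the standard fact that the little Schr\"oder numbers have generating function $\bigl(1+w-\sqrt{1-6w+w^2}\bigr)/4$. Failing that, I would derive the recurrence directly. Differentiate $F=\sqrt{1-6w+w^2}$ to get the first-order linear ODE $(1-6w+w^2)F'=(w-3)F$. This transfers to $S$ as an ODE with polynomial coefficients. Comparing coefficients of $w^{n-1}$ in that ODE then yields $n\mathfrak{C}_n=3(2n-3)\mathfrak{C}_{n-1}-(n-3)\mathfrak{C}_{n-2}$. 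The remaining checks are the initial values $\mathfrak{C}_1=1$ (from $S=w+\cdots$) and $\mathfrak{C}_2=1$ (from $u=w+\cdots$). Integrality of the coefficients is automatic from the fixed-point iteration $u=w(1+3u+2u^2)$.
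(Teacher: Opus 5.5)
Your proof is correct, but it is organized differently from the paper's. The paper argues digit by digit. It assumes $2^\kappa = 3+\sum_{i=1}^{k}\mathfrak{C}_{i+1}2^{2+\alpha i}p^{\beta i}+Mp^{(k+1)\beta}$, substitutes this into (\ref{spidey}), divides by $p^\beta$ and expands the square of the partial sum. It then uses the convolution identity $\mathfrak{C}_{k+2}=3\mathfrak{C}_{k+1}+2\sum_{i=1}^{k-1}\mathfrak{C}_{i+1}\mathfrak{C}_{k-i+1}$, read off from $2S^2-(1+x)S+x=0$, to force $M\equiv \mathfrak{C}_{k+2}2^{2+(k+1)\alpha} \mod{p^\beta}$. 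Existence and uniqueness of each new digit are handled together, and the link between that generating function and the three-term recurrence in the statement is simply cited from Stanley.

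You separate the two issues. First, the congruence $x\equiv 3+Q(x^2-1)$ has at most one solution modulo $p^{t\beta}$, because $g(x)-g(x')=Q(x+x')(x-x')$ gains a factor $p^\beta$ at each step. Second, you write down that solution in closed form: the substitution $X=3+4u$, $S=w(1+u)$ carries the fixed-point equation exactly onto the Schr\"oder equation. This buys several things:
\begin{enumerate}
\item It avoids the bookkeeping with squared partial sums.
\item It explains why the Schr\"oder--Hipparchus numbers appear at all.
\item It gives integrality of the $\mathfrak{C}_n$ for free, which is not obvious from a recurrence that divides by $n$.
\item Your ODE step closes the citation gap between the generating function and the recurrence as actually stated. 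Indeed $(1-6w+w^2)S'+(3-w)S=1-w$ yields exactly $n\mathfrak{C}_n=3(2n-3)\mathfrak{C}_{n-1}-(n-3)\mathfrak{C}_{n-2}$ for $n\ge 3$, together with $\mathfrak{C}_1=\mathfrak{C}_2=1$.
\end{enumerate}

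One point deserves an explicit sentence. The equation $2S^2-(1+w)S+w=0$ has a unique power-series root with zero constant term, since $S=w-wS+2S^2$ determines it coefficient by coefficient. That is what guarantees $w(1+u)=\tfrac14\bigl(1+w-\sqrt{1-6w+w^2}\bigr)$.

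As a small bonus, $2^\omega p^\gamma\equiv 0 \mod{p^\gamma}$, so your uniqueness argument runs modulo $p^\gamma$ as well. It therefore gives the congruence modulo $p^\gamma$ with the sum extended to $i=t$.
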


\begin{proof}
 We have the generating function for the Schr\"oder-Hipparchus numbers (see e.g. Stanley \cite{S}) 
$$
S(x)=\sum_{n=1}^\infty  \mathfrak{C}_n x^n = 
\frac{1}{4} \left( 1+x -\sqrt{1-6x+x^2} \right),
$$
so that
$$
2 S(x)^2-(1+x) S(x) +x=0.
$$
Substituting, expanding and equating coefficients of $x$, we find that 
$$
 \mathfrak{C}_{k+2} +\mathfrak{C}_{k+1} = 
2  \sum_{i=0}^{k}   \mathfrak{C}_{i+1} \mathfrak{C}_{k-i+1}
$$
and so
\begin{equation} \label{pump}
 \mathfrak{C}_{k+2} =3 \mathfrak{C}_{k+1} 
+2  \sum_{i=1}^{k-1}   \mathfrak{C}_{i+1} \mathfrak{C}_{k-i+1}.
\end{equation}

We will show that 
\begin{equation} \label{mug}
2^\kappa \equiv  3 + \sum_{i=1}^{k} \mathfrak{C}_{i+1} 2^{2+\alpha i} p^{\beta i} \mod{p^{(k+1)\beta}},
\end{equation}
for all $0 \leq k < t=\left[ \gamma/\beta \right]$, which will complete the proof of Proposition \ref{ghost}. Notice that, from (\ref{spidey}), we have $2^\kappa \equiv 3 \mod{p^\beta}$ and hence (\ref{mug}) with $k=0$. Assume that we have (\ref{spidey}) for some fixed $0 \leq k \leq t-2$. Then 
$$
2^\kappa =  3 + \sum_{i=1}^{k} \mathfrak{C}_{i+1} 2^{2+\alpha i} p^{\beta i} +M p^{(k+1)\beta},
$$
for some integer $M$ and so (\ref{spidey}) yields
$$
2^{\alpha-1} (3+\sum_{i=1}^{k} \mathfrak{C}_{i+1} 2^{2+\alpha i} p^{\beta i} +M p^{(k+1)\beta}-2^\omega p^\gamma)^2 = 2^\omega p^{\gamma-\beta} +2^{\alpha-1} + \sum_{i=1}^{k} \mathfrak{C}_{i+1} 2^{2+\alpha i} p^{\beta (i-1)} +M p^{k\beta}.
$$
From $\gamma \geq t \beta \geq (k+2)\beta$,
$$
2^{\alpha-1} \left( 3+\sum_{i=1}^{k} \mathfrak{C}_{i+1} 2^{2+\alpha i} p^{\beta i} \right)^2 \equiv 2^{\alpha-1} + \sum_{i=1}^{k} \mathfrak{C}_{i+1} 2^{2+\alpha i} p^{\beta (i-1)} +M p^{k\beta}
\mod{p^{(k+1)\beta}}.
$$
Expanding, it follows that 
$$
2^{\alpha-1} \left( \sum_{i=1}^{\left[ \frac{k+1}{2} \right]} \mathfrak{C}_{i+1}^2 2^{4+2\alpha i} p^{2\beta i}  + 6 \sum_{i=1}^{k} \mathfrak{C}_{i+1} 2^{2+\alpha i} p^{\beta i} +  2 \sum_{i=1}^{\left[ \frac{k+1}{2} \right]}  \sum_{j=i+1}^{k-i} \mathfrak{C}_{i+1} \mathfrak{C}_{j+1} 2^{4+\alpha (i+j)} p^{\beta (i+j)}  \right)
$$
is congruent modulo $p^{(k+1)\beta)}$ to
$$
 \sum_{i=2}^{k} \mathfrak{C}_{i+1} 2^{2+\alpha i} p^{\beta (i-1)} +M p^{k\beta},
$$
whereby, appealing to (\ref{pump}),
$$
M \equiv 3 \mathfrak{C}_{k+1} 2^{(k+1)\alpha+2}
+ 2^{(k+1)\alpha+3}  \sum_{i=1}^{k-1}   \mathfrak{C}_{i+1} \mathfrak{C}_{k-i+1}
\equiv \mathfrak{C}_{k+2} 2^{2+(k+1)\alpha}  \mod{p^\beta},
$$
which gives us (\ref{mug}) with $k$ replaced by $k+1$, as desired. 
This completes the proof of Proposition \ref{ghost}.
\end{proof}

\subsection{Applying Proposition \ref{ghost}}
 
 We suppose, from Propositions \ref{prop-lower} and \ref{prop-lowerp}, that $p > 10^{10}$ and 
$$
\max \{ 2 \beta+1, 4 \} \leq \gamma \leq 5039.
$$
Notice further that $\gamma$ even implies from (\ref{spidey}) that $\omega=\alpha=1$, $p \equiv 5 \mod{8}$ and $\beta$ is odd.
 
\subsubsection{The case $\alpha \geq 2$}

If $\alpha \geq 2$, then $\omega = 0$ and $\gamma$ is odd. From $p > 10^{10}$ and (\ref{snoopy!!}), we have 
\begin{equation} \label{magnet}
\kappa \geq \kappa-\omega > 0.99 \frac{\log p}{\log 2} \, \gamma > 33 \gamma,
\end{equation}
while equation (\ref{spidey}) yields the congruence
\begin{equation} \label{fridge}
2^{\alpha-1} p^{2\gamma+\beta} - p^\gamma - 2^{\alpha-1} p^\beta +3 \equiv 0 \mod{2^{\kappa}}.
\end{equation}
We will employ these later to improve our lower bound upon $p$.

To proceed, we note that, from Proposition \ref{Goop}, 
\begin{equation} \label{kane}
\left| 2^\kappa - p^\gamma \right| <  \left( \frac{3 \cdot 2^{1-\alpha} p^\gamma}{p^\beta} \right)^{1/2},
\end{equation}
while Proposition \ref{ghost} implies that
$$
2^\kappa - p^\gamma = 3 + \sum_{i=1}^{t-1} \mathfrak{C}_{i+1} 2^{2+\alpha i} p^{\beta i} +m p^{t \beta},
$$
for some integer $m$. If $m \geq 0$, then 
$$
2^\kappa - p^\gamma > \mathfrak{C}_{t} 2^{2+\alpha (t-1)} p^{\beta (t-1)},
$$
which, in conjunction with (\ref{kane}) yields
$$
\mathfrak{C}_{t} 2^{2+\alpha (t-1)} p^{\beta (t-1)} <
3^{1/2} 2^{(1-\alpha)/2} p^{t \beta/2},
$$
contradicting $t \geq 2$. We thus have $m \leq -1$ and so, from (\ref{kane}), 
\begin{equation} \label{fossil}
3^{1/2} 2^{(1-\alpha)/2} p^{t \beta/2} > p^{t\beta} -3-
\sum_{i=1}^{t-1} \mathfrak{C}_{i+1} 2^{2+\alpha i} p^{\beta i}.
\end{equation} 

Observe that, immediately from $\mathfrak{C}_1=\mathfrak{C}_2=1$ and the recursion for $\mathfrak{C}_n$, we have 
\begin{equation} \label{boop}
\mathfrak{C}_n < 6^n,
\end{equation}
for all $n \geq 1$. This can be improved, since it is relatively straightforward to show that 
$$
\log \mathfrak{C}_n \sim \log (3+2\sqrt{2}) n, 
$$
but is adequate for our purposes. Inequality (\ref{fossil}) thus implies that
\begin{equation} \label{trap}
1 < 3 \cdot p^{-t\beta} + 3^{1/2} 2^{(1-\alpha)/2} p^{-t \beta/2}
+\sum_{i=1}^{t-1} 6^{i+1} 2^{2+\alpha i} p^{\beta (i-t)},
\end{equation}
and hence that
$$
p^{\beta} < 3 \cdot p^{(1-t)\beta} + 3^{1/2} 2^{(1-\alpha)/2} p^{1+t \beta/2}
+24 \cdot \left( 6 \cdot 2^{\alpha} \right)^{t-1} 
\frac{6q}{6q-1}.
$$
From $t \geq 2$, we therefore have
\begin{equation} \label{trappy}
p^\beta < 25 \left( 6 \cdot 2^\alpha \right)^{t-1}.
\end{equation}

If we suppose now that $\alpha \leq 20$, then $\gamma \leq 5039$ and (\ref{trappy}) imply that
$$
p^\beta < 25 \cdot \left( 6 \cdot 2^{20} \right)^{\frac{\gamma}{\beta}-1} < \left( 6 \cdot 2^{20} \right)^{\frac{\gamma}{\beta}} 
$$
and hence,
since $p > 10^{10}$, 
\begin{equation} \label{b-bound}
1 \leq \beta < 0.825 \, \gamma^{1/2}.
\end{equation}

We will proceed as follows. For each odd $\gamma$,  with $5 \leq \gamma \leq 5039$, each $\alpha$ with $2 \leq \alpha \leq 20$, and each $\beta$ satisfying (\ref{b-bound}) (these inequalities correspond to precisely $1942740$ triples $(\gamma,\alpha,\beta)$), we will solve a number of congruences of the shape
\begin{equation} \label{farkle}
2^{\alpha-1} p^{2\gamma+\beta} - p^\gamma - 2^{\alpha-1} p^\beta +3 \equiv 0 \mod{2^{k}},
\end{equation}
which we know to hold, from (\ref{fridge}), for all $k \leq \kappa$. For each triple $(\gamma,\alpha,\beta)$ and suitably large $k$, this will lead to a lower bound for $p$ that contradicts (\ref{trappy}). Specifically, noting that (\ref{magnet}) and $\gamma \geq 5$ together imply that we always have $\kappa >100$, we will first solve the congruence (\ref{farkle}) with $k=100$, and then use quadratic Hensel lifting to extend this to solutions with $k=2^j \cdot 100$, for 
$$
j = 1, 2, \ldots, \left[ \frac{\log (\kappa/100)}{\log 2} \right].
$$

To be precise, we define a polynomial
$$
f(x) = 2^{\alpha-1} x^{2\gamma+\beta} - x^\gamma - 2^{\alpha-1} x^\beta +3,
$$
so that
$$
f^\prime(x) = 2^{\alpha-1} (2\gamma+\beta) x^{2\gamma+\beta-1} - \gamma x^{\gamma-1} - 2^{\alpha-1} \beta x^{\beta-1}
$$
is necessarily, from $\alpha \geq 2$, odd whenever $f$ is even (whereby our Hensel lifts are unique). 
We compute positive integers $r_j$ such that $f(r_j) \equiv 0 \mod{2^{2^j \cdot 100}}$, as follows. Via built-in code from Maple2024, we find the least positive integer $r_0$ such that 
$$
f(r_0) \equiv 0 \mod{2^{100}},
$$
and define $d_0$ to be the smallest positive integer with
$$
d_0 f^\prime (r_0) \equiv 1 \mod{2^{100}}.
$$
We then define iteratively, for $j=1, 2, \ldots$, least positive residues  $r_j$ and $d_j$ with the properties that
$$
f(r_j) \equiv 0 \mod{2^{2^j \cdot 100}},
$$
and 
$$
d_j f^\prime (r_j) \equiv 1 \mod{2^{2^j \cdot100}},
$$
via
$$
r_j \equiv r_{j-1} - f(r_{j-1}) \cdot d_{j-1} \mod{2^{2^j \cdot 100}}
$$
and
$$
d_j \equiv d_{j-1} (2- f^\prime (r_j) d_{j-1}) \mod{2^{2^j \cdot 100}}.
$$
It follows from (\ref{magnet}) and (\ref{fridge}) that necessarily $p \geq r_j$ for every $j$ with $2^j \cdot 100 < 33 \gamma$. Running this computation, we find, for each triple $(\gamma,\alpha,\beta)$, a value of $j$ for which 
$$
2^j \cdot 100 < 33 \gamma \; \mbox{ and } \; r_j > 2^{23 \gamma} > 25 \cdot t \left( 6 \cdot 2^\alpha \right)^{t-1},
$$
contradicting $p \equiv r_j \mod{2^{2^j \cdot 100}}$ and (\ref{trappy}). In fact, in every case, we have $j \leq 11$. By way of example, if we take the extreme case $(\gamma,\alpha,\beta)=(5039,20,1)$, our initial values are
$$
r_0=1017224537123983744313102132587 \; \mbox{ and } \; g_0=864194828523587462947121421225.
$$
Iterating, we compute $r_1, r_2, \ldots, r_{11}$ and find that they satisfy the following.
$$
\begin{array}{cc|cc}
j & \left[ \frac{\log r_j}{\log 2} \right] & j & \left[ \frac{\log r_j}{\log 2} \right] \\ \hline
1 & 199 & 7 & 12798 \\
2 & 399 & 8 & 25596 \\
3 & 798 & 9 & 51199 \\
4 & 1599 & 10 & 102394 \\
5 & 3199 & 11 & 204798. \\
6 & 6398 & & \\ 
\end{array}
$$
Since $204798 > 23 \cdot 5039$, we have $p \geq r_{11} > 2^{23 \gamma}$ and reach our desired contradiction from (\ref{trappy}).

\subsubsection{The case $\alpha =1$} We next treat the case $\alpha=1$. Equation (\ref{spidey})  now becomes
\begin{equation} \label{spidey3}
 p^\beta (2^\kappa-2^\omega p^\gamma)^2 = 2^\kappa+2^\omega p^\gamma + p^\beta-3.
 \end{equation}
From Proposition \ref{ghost}, we have
$$
2^\kappa - 2^\omega p^\gamma = 3 + \sum_{i=1}^{t-1} \mathfrak{C}_{i+1} 2^{2+ i} p^{\beta i} +m p^{t \beta},
$$
for an integer $m$. Again, if $m \geq 0$, then 
$$
2^\kappa - 2^\omega p^\gamma > \mathfrak{C}_{t} 2^{t+1} p^{\beta (t-1)},
$$
which, from  Proposition \ref{Goop}, implies that 
$$
\mathfrak{C}_{t} 2^{t+1} p^{\beta (t-1)} <
3^{1/2} 2^{\omega/2} p^{(\gamma-\beta)/2}
< 3^{1/2} 2^{\omega/2} p^{t \beta/2}. 
$$
If $\omega=0$, this immediately contradicts $t \geq 2$. If $\omega \geq 1$, then necessarily $m$ is odd and we have, 
from Proposition \ref{Goop}, 
$$
3^{1/2} 2^{\omega/2} p^{t \beta/2} > p^{t\beta}.
$$
Since Proposition \ref{Goop} implies that 
$\omega < \kappa$,  (\ref{spidey3}) thus yields $p^\beta \equiv 3 \mod{2^\omega}$, whence $2^\omega < p^\beta$. We therefore have 
$$
3^{1/2}  p^{(t+1)\beta/2} > p^{t\beta},
$$
contradicting $t \geq 2$. We may thus suppose that 
 $m \leq -1$ and so, from (\ref{kane}), 
\begin{equation} \label{fossil2}
3^{1/2} p^{(t+1) \beta/2} > p^{t\beta} -3-
\sum_{i=1}^{t-1} \mathfrak{C}_{i+1} 2^{2+ i} p^{\beta i}.
\end{equation} 
Arguing as previously, it follows from (\ref{boop}), $t \geq 2$ and $p>10^{10}$ that
$$
p^\beta < 2.1 \cdot 12^{t}.
$$
Thus
\begin{equation} \label{trappy2}
p^\beta < 2.1 \cdot 12^{\gamma/\beta}.
\end{equation}
Since $p > 10^{10}$, it follows that 
\begin{equation} \label{b-bound2}
1 \leq \beta < 0.355 \, \gamma^{1/2}.
\end{equation}
In particular, we thus have $\gamma \geq 9$ and, from $\gamma \leq 5039$, $\beta \leq 25$.

For $\beta=25$, the congruence $p^{25} \equiv 3 \mod{2^{36}}$ implies that $p \equiv 40882718819 \mod{2^{36}}$ and so
$p \geq 40882718819$. From (\ref{trappy2}) and $p^\beta \equiv 3 \mod{2^\omega}$, we deduce a contradiction and so $\omega \leq 35$. Arguing in a like fashion for each odd $\beta$, $1 \leq \beta \leq 25$, we obtain  upper bounds for $\omega$ as follows.

$$
\begin{array}{cc|cc}
\beta & \omega \leq \omega_0 & \beta & \omega \leq \omega_0 \\ \hline
1 & \omega_0=18077 & 15 & \omega_0=81 \\
3 &  \omega_0=2013 & 17 & \omega_0=63 \\
5 & \omega_0=729 & 19 & \omega_0=51 \\
7 & \omega_0=370 & 21 &  \omega_0=42 \\
9 & \omega_0=224 & 23 &  \omega_0=38 \\
11 & \omega_0=151 & 25 &  \omega_0=35 \\
13 &  \omega_0=108 & & \\
\end{array}
$$
Again combining  (\ref{trappy2}) and $p^\beta \equiv 3 \mod{2^\omega}$, we have
\begin{equation} \label{trappy3}
2^\omega < 2.1 \cdot \frac{\gamma}{\beta}  \cdot 12^{\frac{\gamma}{\beta}}.
\end{equation}

We are thus left to treat  triples $(\gamma,\beta, \omega)$ satisfying (\ref{b-bound2}), (\ref{trappy3})  and one of
\begin{itemize}
\item $\omega=0$, $\gamma$ odd, $9 \leq \gamma \leq 5039$, or
\item $\omega=1$, $\gamma$ odd, $9 \leq \gamma \leq 5039$, $\beta \not\equiv 2 \mod{4}$, or
\item $\omega=1$, $\gamma$ even, $10 \leq \gamma \leq 5038$, $\beta$ odd, or
\item $\omega \geq 2$,  $9 \leq \gamma \leq 5039$,  $\beta$ odd and $\omega \leq \omega_0(\beta)$.
\end{itemize}
There are precisely $92725$ such triples with $\omega \leq 1$; the great majority of triples correspond to $\omega \geq 2$, namely, $61634419$. For each triple, we will proceed analogously to the situation with $\alpha>1$, noting, from (\ref{spidey3}), the congruence 
\begin{equation} \label{fridge2}
2^{2\omega} p^{2\gamma+\beta} - 2^\omega p^\gamma - p^\beta+3  \equiv 0 \mod{2^\kappa}.
\end{equation}
Once again, we have inequality (\ref{magnet}), and so, since $\gamma \geq 4$, $\kappa > 100$ in all cases. As previously, we solve 
$$
2^{2\omega} r_j^{2\gamma+\beta} - 2^\omega r_j^\gamma - r_j^\beta+3  \equiv 0 \mod{2^{2^j \cdot 100}},
$$
for $j=0, 1, 2, \ldots$, use the inequality $p \geq r_j$ to, via (\ref{magnet}), increase our lower bound upon $\kappa$, so that we may iterate the process, eventually arriving at a lower bound for $p$ that contradicts (\ref{trappy2}). 
A potential  minor additional complication for our Hensel lifting comes from the fact that the polynomial  $f(x)=2^{2\omega} x^{2\gamma+\beta} - 2^\omega x^\gamma - x^\beta+3$  has an even derivative if $\omega=1$ and $4 \mid \beta$, but this causes no problems as 
we have $f^\prime (x) \equiv 2 \mod{4}$ for all $x$, in this case, and, as before, we begin by solving $f(x) \equiv 0 \mod{2^{100}}$. The sheer numbers of triples involved (in excess of $6 \times 10^7$)  makes this a somewhat slow process, though readily parallelized. In every case, we ultimately contradict (\ref{trappy2}).

 \subsection{The proof of Theorem \ref{thm-mainx} concluded}
 
It remains, then, to show that there are finitely many perfect $2$-codes on $q$ symbols, with $q=2^\alpha p^\beta$, $p$ prime and $\alpha > 20$. Unlike our work in case $\alpha \leq 20$, here our conclusion is ineffective. We will appeal to the following result of Ridout \cite{Rid}.
\begin{thm}[Ridout]\label{Riddy}
Let $\alpha$ be a nonzero algebraic number and $P_1, P_2, \ldots, P_s$, $Q_1, Q_2, \ldots, Q_t$  distinct primes. Suppose that $\mu, \nu$ and $c$ are given real numbers with
$$
0 \leq \mu \leq 1, \; \; 0 \leq \nu \leq 1 \;  \mbox{ and }  \; c > 0,
$$
and restrict our attention to positive integers $p$ and $q$ with the property that the largest divisor $p_0$ of $p$ that is coprime to $P_1P_2 \cdots P_s$, and the largest divisor $q_0$ of $q$  coprime to $Q_1Q_2 \cdots Q_t$ satisfy
$$
p_0 \leq c p^\mu \; \mbox{ and } \; q_0 \leq c q^\nu.
$$
Then, given $\epsilon > 0$, the inequality
$$
\left| \alpha - \frac{p}{q} \right| < \frac{1}{q^{\mu+\nu+\epsilon}}
$$
has at most finitely many solutions in such $p$ and $q$.
\end{thm}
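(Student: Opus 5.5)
We intend to derive Theorem \ref{Riddy} from the $p$-adic Subspace theorem of Schmidt and Schlickewei in dimension two, that is, from Roth's theorem with finitely many non-archimedean places adjoined. It is also possible to rerun Roth's auxiliary polynomial argument with $p$-adic valuations, which was Ridout's original route, but the reduction to the Subspace theorem is shorter.

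\emph{Preliminary reductions.} If $\alpha\notin\mathbb{R}$, then $|\alpha-p/q|\ge|\mathrm{Im}\,\alpha|>0$. So only finitely many $q$ can occur, and for each such $q$ the inequality forces $p$ into a bounded range. We may therefore assume $\alpha$ is real. For all but finitely many solutions $|\alpha-p/q|<1$, and since $\alpha\neq 0$ this gives $p\asymp q$ (meaning $p$ and $q$ are bounded above and below by constant multiples of each other), with constants depending only on $\alpha$. Consequently $H(p,q)=\max\{p,q\}\asymp q$.

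\emph{Setting up the Subspace inequality.} Let $S=\{\infty,P_1,\dots,P_s,Q_1,\dots,Q_t\}$, and for each $v\in S$ choose two linearly independent linear forms in $(X,Y)$ with algebraic coefficients:
\begin{itemize}
\item at $\infty$, take $X-\alpha Y$ and $Y$;
\item at each $P_i$, take $X$ and $Y$;
\item at each $Q_j$, take $Y$ and $X$.
\end{itemize}
Evaluate the double product $\prod_{v\in S}\prod_{k=1}^2|L_{k,v}(p,q)|_v$.
\begin{itemize}
\item The archimedean contribution is $|p-\alpha q|\cdot q=q^2|\alpha-p/q|<q^{2-\mu-\nu-\epsilon}$.
\item The first forms at the $P_i$ contribute $\prod_i|p|_{P_i}=p_0/p\le c\,p^{\mu-1}$.
\item The first forms at the $Q_j$ contribute $\prod_j|q|_{Q_j}=q_0/q\le c\,q^{\nu-1}$.
\item The remaining forms at the finite places are evaluated at integers, so each contributes at most $1$.
\end{itemize}
Using $p\asymp q$, the total is $\ll q^{2-\mu-\nu-\epsilon}\,q^{\mu-1}\,q^{\nu-1}=q^{-\epsilon}\ll H(p,q)^{-\epsilon/2}$ for $q$ large. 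At no point do we need $\gcd(p,q)=1$: the Subspace theorem is applied to integer points with height $\max\{|p|,|q|\}$, and a common factor only makes the finite-place contributions smaller.

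\emph{Conclusion.} The Subspace theorem, in Schlickewei's $p$-adic form, now says that all such $(p,q)$ lie in finitely many proper subspaces of $\mathbb{Q}^2$, so only finitely many ratios $p/q=\rho$ occur. Fix one such ratio $\rho$.
\begin{itemize}
\item If $\rho\neq\alpha$, the inequality reads $0<|\alpha-\rho|<q^{-(\mu+\nu+\epsilon)}$, and since $\mu+\nu+\epsilon>0$ this bounds $q$. Then $p=\rho q$ is also bounded, so this ratio yields finitely many pairs.
\item If $\rho=\alpha$, then $\alpha$ is rational. Such pairs make the left side of the inequality zero, so they must be excluded as usual: the theorem is only meaningful for $p/q\neq\alpha$.
\end{itemize}

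The main obstacle is the bookkeeping in the second step. Everything depends on choosing, at each prime in $S$, the forms so that the $S$-part of $p$ (respectively $q$) is what produces the saving $p^{\mu-1}$ (respectively $q^{\nu-1}$). We must also check that the normalisation of the Subspace theorem we cite matches: in dimension two with these forms the determinants are $\pm1$, so no extra factor appears. If an ineffective but cleaner reference is preferred, the same computation shows that Theorem \ref{Riddy} is precisely the case $n=2$ of the $p$-adic Subspace theorem with the forms chosen above.
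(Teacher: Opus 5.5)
Your argument is correct. The paper gives no proof of this statement: it quotes the result from Ridout \cite{Rid} and remarks elsewhere that it is a special case of the Subspace theorem. Your choice of forms ($X-\alpha Y$ and $Y$ at $\infty$; $X$ and $Y$ at each $P_i$; $Y$ and $X$ at each $Q_j$) is the standard way to make that remark precise. The exponent count $q^{2-\mu-\nu-\epsilon}\cdot p^{\mu-1}\cdot q^{\nu-1}\ll q^{-\epsilon}$ is right, and so is the passage from finitely many lines to finitely many pairs.

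Two small points need tightening.

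First, your remark about rational $\alpha$ is a genuine correction to the statement as written, not a convention. Take $\alpha\in\mathbb{Q}$ and $\mu=\nu=1$; then every pair of positive integers is admissible with $c=1$. All pairs with $p/q=\alpha$ then satisfy the inequality, so it must be read as $0<|\alpha-p/q|<q^{-\mu-\nu-\epsilon}$, or $\alpha$ must be assumed irrational.

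Second, $|\alpha-p/q|<1$ alone does not give $p\gg q$ when $|\alpha|<1$. Use instead that $|\alpha-p/q|<|\alpha|/2$ for all but finitely many solutions, since the right-hand side tends to $0$. This also disposes of negative $\alpha$.

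Finally, the two-dimensional $p$-adic Subspace theorem is essentially the Roth--Ridout theorem itself; Ridout's own proof adapts Roth's auxiliary-polynomial method to $p$-adic valuations. So your argument is a correct reduction to a more general black box, which is exactly how the paper views the result, rather than an independent proof.
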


Note that we have
\begin{equation} \label{knife}
\left|  2^\kappa-p^\gamma \right| < 3^{1/2} 2^{(1+\kappa-\alpha)/2 } p^{-\beta/2}
\end{equation}
and write $\kappa/\gamma = \delta+\kappa_0/\gamma$, where $\kappa_0$ is an integer with $0 \leq \kappa_0 < \gamma$, so that,
crudely,
\begin{equation} \label{jump}
\left| 2^{\kappa_0/\gamma} - \frac{p}{2^\delta} \right| < 2^{-\delta \gamma/2}.
\end{equation}
Applying Theorem \ref{Riddy} for each odd $\gamma$ with $5 \leq \gamma \leq 5039$, and each $0 \leq \kappa_0 < \gamma$, setting $\alpha =2^{\kappa_0/\gamma}$, $Q_1=2$, $c=\mu=1$, $\nu=0$,
since $\gamma > 4$, there thus exist at most finitely many $p$ satisfying (\ref{jump}). An application of Theorem \ref{thm-main0}
thus completes the proof of Theorem \ref{thm-mainx}.

\section{Proof of Theorem \ref{thm-main!}}

Suppose that $P_0 \geq 2$ is a given  integer.  Suppose further  that $q=q_0p^\alpha$ for $p$ prime and $q_0$ an integer with $P(q_0) \leq P_0$. Without loss of generality, assume that $p > \max \{ P_0, 1000 \}$. Then the corresponding zeros of the quadratic Lloyd polynomial, $r_1$ and $r_2$, necessarily satisfy
$$
r_i = q_1 \; \mbox{ and } \; r_{3-i} = q_2 p^\gamma, \; \mbox{ for } i \in \{ 1, 2 \},
$$
where $P(q_1) \leq P_0$, $P(q_2) \leq P_0$ and $\gamma$ is a nonnegative integer. If $\gamma=0$, the desired result follows from Theorem \ref{thm-main0}, so  we will assume that $\gamma$ is positive. If we set $d=\gcd (q_1, q_2)$, then, from (\ref{good}), $P(d) \leq 3$.
Suppose that $\ell^\kappa$ is the largest prime power dividing $r_1r_2/d^2$, so that $\kappa \log\ell = \theta \gamma \log p$, where $\theta \geq 1$. From Proposition \ref{Goop}, we have, very crudely,
\begin{equation} \label{drive}
\left| \frac{r_2}{r_{1}} -1 \right|  < \ell^{-\kappa/2}.
\end{equation}
To derive a lower bound upon the left-hand-side of (\ref{drive}), we will appeal to a result of Matveev \cite{Mat}, a bound for linear forms in complex logarithms, as presented in slightly modified form by Bugeaud as Theorem 2.2 of \cite{Bug}.
\begin{thm}[Matveev] \label{Mat} Let $n \geq 1$ be an integer and
suppose that $\alpha_1, \alpha_2, \ldots, \alpha_n$ are algebraic numbers, and that $b_1, b_2, \ldots, b_n$ are integers.
Let $D=\mathbb{Q}(\alpha_1,\alpha_2, \ldots, \alpha_n)$ and $A_1, A_2, \ldots, A_n$ be real numbers satisfying
$$
\log A_j \geq \max \left\{ h(\alpha_j), \frac{|\log \alpha_j|}{D}, \frac{0.16}{D}  \right\}, \; \; 1 \leq j \leq n.
$$
Set
$$
B= \max \{ |b_1|, |b_2|, \ldots, |b_n| \},
$$
$$
B_0=\max \left\{ 1, \max \{ |b_j| \log A_j/\log A_n : 1 \leq j \leq n \} \right\}
$$
and
$$
C= \max \{ B_0, nB \pi/(D \log A_n) \}.
$$
Then we have
$$
\log \left| \alpha_1^{b_1} \alpha_2^{b_2} \cdots \alpha_n^{b_n} -1 \right| > -3 \times 30^{n+4} (n+1)^{5.5} D^{n+2} \log (eD)
\log A_1 \cdots \log A_n \log (eC).
$$
\end{thm}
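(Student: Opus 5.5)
This statement is not proved in the paper; it is quoted from the literature, namely Matveev \cite{Mat} in the normalisation of Theorem 2.2 of Bugeaud \cite{Bug}. I would take Matveev's lower bound for linear forms in logarithms as a black box. A proof from scratch would mean rerunning Baker's method: an auxiliary function built from Siegel's lemma, extrapolation, and a zero estimate on commutative algebraic groups, with Matveev's refinement giving the $30^{n}$ dependence instead of $n^{cn}$. That is far outside the scope here. What I can sketch is how the multiplicative form stated above follows from Matveev's additive bound. That deduction is short, but it has one technical point: choosing the branch of the logarithm.

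\emph{Step 1: move to a linear form.} Fix the principal determinations $\log \alpha_j$, so $|\mathrm{Im}\log\alpha_j|\le \pi$. Put $\Lambda_0 = \alpha_1^{b_1}\cdots\alpha_n^{b_n}-1$. We may assume $\Lambda_0 \neq 0$ and $|\Lambda_0|\le 1/2$, since otherwise the inequality is trivial. Then there is an integer $b_0$ with
\begin{equation*}
\Lambda = b_1\log\alpha_1+\cdots+b_n\log\alpha_n + 2\pi i\, b_0 = \log(1+\Lambda_0),
\end{equation*}
where $\log$ on the right is again principal. From $|\log(1+z)|\le 2|z|$ for $|z|\le 1/2$ we get $|\Lambda|\le 2|\Lambda_0|$. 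So it is enough to bound $|\Lambda|$ from below.

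\emph{Step 2: control $b_0$.} Taking imaginary parts gives $2\pi|b_0|\le \pi\sum|b_j| + \pi/2$, so $|b_0|\le nB/2+1$. This is where the quantity $nB\pi/(D\log A_n)$ in $C$ comes from. The extra term $2\pi i b_0 = 2b_0\log(-1)$ is handled as in Matveev's own complex version. In that version the $(n+1)$-st logarithm, $\log(-1)=\pi i$, is absorbed at the cost of replacing the usual $B_0$ by $\max\{B_0,\, nB\pi/(D\log A_n)\}$. One checks that $\log A = \pi/D$ is an admissible height bound for $-1$. The lower bound $0.16/D$ in the definition of $\log A_j$ matches Matveev's normalisation.

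\emph{Step 3: apply Matveev and collect constants.} Apply Matveev's Corollary 2.3, the complex, non-real case, over the field $\mathbb{Q}(\alpha_1,\ldots,\alpha_n)$ of degree $D$. This gives
\begin{equation*}
\log|\Lambda| > -C(n)\, D^{n+2}\log(eD)\,\log A_1\cdots\log A_n\,\log(eC),
\end{equation*}
where $C(n)$ is Matveev's explicit constant. When the field is real, the corresponding constant is smaller, so the complex one covers both cases. Then bound $C(n)\le 3\times 30^{n+4}(n+1)^{5.5}$; this absorbs Matveev's $e/2$ and $\varkappa$ factors. Finally pass back to $\Lambda_0$ using $\log|\Lambda_0|\ge \log|\Lambda|-\log 2$, and absorb the $-\log 2$ into the leading constant.

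The main obstacle is Step 2 together with the constant bookkeeping in Step 3. One has to check that introducing $b_0$ does not change the number of logarithms in the exponent $n+2$ of $D$ beyond what Matveev's complex version already accounts for. I would settle this by following Bugeaud's proof of his Theorem 2.2 line by line, rather than redoing the numerics from scratch.
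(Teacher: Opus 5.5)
Your approach is the paper's. The paper gives no proof of this statement; it quotes Matveev's bound in the normalisation of Bugeaud's Theorem 2.2. Your argument rests on the same citation, because the one nontrivial step of your sketch is exactly what Matveev's complex bound for $\alpha_1^{b_1}\cdots\alpha_n^{b_n}-1$ already supplies: absorbing $2\pi i b_0$ at the sole cost of replacing $B_0$ by $C$.

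One correction: the case $\alpha_1^{b_1}\cdots\alpha_n^{b_n}=1$ is not ``trivial''. There the left-hand side is $\log 0=-\infty$ and the inequality fails; for example, take $\alpha_1=2$, $\alpha_2=4$, $b_1=2$, $b_2=-1$. So nonvanishing is a genuine hypothesis of Matveev's theorem that the quoted statement omits. It is harmless in the paper, where the form is $r_2/r_1-1$ with $r_1<r_2$.

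Also, your worry about the exponent of $D$ is unfounded. Suppose you treat $-1$ as an extra number with $\log A_{n+1}=\pi/D$, listed before $\alpha_n$ so that it contributes $2|b_0|\pi/(D\log A_n)\le nB\pi/(D\log A_n)$ to $B_0$. Then $D^{n+3}\cdot \pi/D=\pi D^{n+2}$, so the power of $D$ is unchanged. The naive route costs only in the numerical constant, which rises to order $30^{n+5}(n+2)^{5.5}$.
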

We apply this with $\alpha_n=p$, $A_n = p$ and $|b_n|=\gamma$, where
$n \leq \pi (P_0)+1$, and $\alpha_1, \ldots, \alpha_{n-1}$ are primes bounded above by $P_0$ (so that $D=1$). We further have
$$
B_0 = \frac{\kappa \log \ell}{\log p} = \theta \gamma
\; \mbox{ and } \; B \leq  \frac{\kappa \log \ell}{\log 2},
$$
so that $C \leq n \pi  \theta \gamma/\log 2$. Applying Theorem \ref{Mat}, it follows from (\ref{drive}) that there exists  constants $c_1=c_1(P_0)$ and  $c_2=c_2(P_0)$ such that 
$$
\theta \gamma \log p \leq c_1 \log p \log (c_2 \theta \gamma),
$$
whence
$$
\theta \gamma  \leq c_1  \log (c_2 \theta \gamma).
$$
It follows that $\theta \gamma$ and hence, since $\theta \geq 1$, $\gamma$ is bounded above, say $\gamma \leq \gamma_0 = \gamma_0 (P_0)$.

For each $\gamma$, $1 \leq \gamma \leq \gamma_0$, we will argue as follows.
Take $P_1, P_2, \ldots, P_s$ to be the distinct prime divisors of $q_2$, and $Q_1, Q_2, \ldots, Q_t$ to be the distinct prime divisors of $q_1$, so that $P_i, Q_i \leq P_0$ and $s+t \leq \pi (P_0)$. Write
$$
q_1 = Q_1^{b_1} \cdots Q_t^{b_t} \; \mbox{ and } \;
q_2=P_1^{a_1} \cdots P_s^{a_s}.
$$
Then, from Proposition \ref{Goop}, crudely,
\begin{equation} \label{morgan}
\left| \alpha - \frac{P_1^{[a_1/\gamma ]} \cdots P_s^{[a_s/\gamma ]} \cdot p }{Q_1^{[ b_1/\gamma ]} \cdots Q_t^{[ b_t/\gamma ]}}
\right| < \frac{3 \alpha}{ \gamma q_1^{1/2} q^{1/2}},
\end{equation}
where we write
$$
\alpha =  \frac{Q_1^{ \{ b_1/\gamma \} } \cdots Q_t^{ \{ b_t/\gamma \} }}{P_1^{\{ a_1/\gamma \}} \cdots P_s^{ \{ a_1/\gamma \} } },
$$
and $[x]$ and $\{ x \}$ denote the greatest integer and fractional part of $x$. From (\ref{good}), we have
$q_1 \equiv 3 \mod{p}$, so that $q_1-q_2p^\gamma = 3+mp$ for an integer $m$. If $m=0$, then
$$
2 q_0 p^\alpha = q_2 p^\gamma,
$$
so that $\alpha=\gamma$ and $q_2=2q_0$, whence
$(q,r_1,r_2) = \left( \frac{3^t-3}{2}, 3^t-3,3^t \right)$. As noted in Section \ref{Sec2}, in this case equation (\ref{roots1}) implies that
$2q^2 = (q-1) (n-2)$,
whereby $t=2$ and $q=3$, a contradiction.

We may therefore assume that $m \neq 0$, so that, from Proposition \ref{Goop}, 
$$
p-3 \leq \left| q_1-q_2p^\gamma \right| < \left( 6q_2p^\gamma/q \right)^{1/2}.
$$
It follows, from $p > 1000$, that
$$
q_2 > \frac{1}{7} p^{2-\gamma} q > \frac{1}{7} p^{3-\gamma}.
$$
Since
$$
Q_1^{[ b_1/\gamma ]} \cdots Q_t^{[ b_t/\gamma ]} \leq q_1^{1/\gamma},
$$
we can thus apply Theorem \ref{Riddy} with suitable absolute constant $c$, $\nu=0$, and $\mu = 1/3$ (if $\gamma=1$), $\mu = 2/3$ (if $\gamma=2$), or $\mu=1$ (if $\gamma \geq 3$), to conclude that inequality (\ref{morgan}) has at most finitely many solutions, ranging over the 
$$
\gamma^{s+t} \leq \gamma^{\pi (P_0)}
$$
possible choices for $\alpha$. This completes the proof of Theorem \ref{thm-main!}.


\end{document}